\documentclass{amsart}

\usepackage[dvipsnames,svgnames,x11names,hyperref]{xcolor}
\usepackage{bbm,url,graphicx,verbatim,amssymb,enumerate,booktabs,lmodern,mathtools,microtype,mathabx}
\usepackage[pagebackref,colorlinks,citecolor=Mahogany,linkcolor=Mahogany,urlcolor=Mahogany,filecolor=Mahogany]{hyperref}
\usepackage[capitalize]{cleveref}
\usepackage{amsmath,amscd}
\usepackage[mathscr]{euscript}
\usepackage[all]{xy}
\usepackage[margin=1.3in]{geometry}
\usepackage{tikz,tikz-cd}
\usetikzlibrary{matrix,calc,positioning,arrows,decorations.pathreplacing,patterns,arrows}

\newtheorem{theorem}{Theorem}[section]
\newtheorem*{theorem*}{Theorem}

\newtheorem{proposition}[theorem]{Proposition}

\newtheorem*{corollary*}{Corollary}

\theoremstyle{definition}
\newtheorem{definition}[theorem]{Definition}

\theoremstyle{remark}
\newtheorem{example}[theorem]{Example}

\newtheorem{remark}[theorem]{Remark}

\newcommand{\Joana}{\textcolor{teal}}

\newcommand{\map}{\textup{map}}

\newcommand{\Dec}{\mathrm{Dec}}
\newcommand{\cat}[1]{{\mathrm{#1}}}

\renewcommand{\rm}[1]{{\mathrm{#1}}}
\newcommand{\lra}{\longrightarrow}

\newcommand{\op}{\rm{op}}

\newcommand{\Fun}{\rm{Fun}}

\newcommand{\Map}{\rm{Map}}

\newcommand{\Aut}{\rm{Aut}}

\newcommand{\id}{\rm{id}}

\newcommand{\Conf}{\rm{Conf}}
\newcommand{\GT}{\rm{GT}}
\newcommand{\ZZ}{\mathbb{Z}}
\newcommand{\QQ}{\mathbb{Q}}
\newcommand{\TT}{\mathbb{T}}

\newcommand{\RR}{\mathbb{R}}

\newcommand{\CC}{\mathbb{C}}

\newcommand{\Aa}{\mathcal{A}}

\newcommand{\Dd}{\mathcal{D}}

\newcommand{\pl}{\mathrm{pl}}
\newcommand{\Weyl}{\mathfrak{W}}

\newcommand{\Ker}{\mathrm{Ker}}

\newcommand{\FCxp}[2]{\mathrm{D}^\alpha_{#1}(\mathbf{F}#2)}

\newcommand{\FCx}[2]{\mathrm{D}_{#1}(\mathbf{F}#2)}

\newcommand{\D}{\mathrm{D}}

\setcounter{tocdepth}{1}

\title{Equivariant formality of the little disks operad}
\author{Pedro Boavida, Joana Cirici and Geoffroy Horel}%

\address[P. Boavida de Brito]{Dept. of Mathematics, Instituto Superior Tecnico, Univ.~of Lisbon, Av.~Rovisco Pais, Lisboa, Portugal}%
\email{pedrobbrito@tecnico.ulisboa.pt}

\address[J. Cirici]{Departament de Matemàtiques i Informàtica, Universitat de Barcelona\\
Gran Via 585\\
08007 Barcelona, Spain}
\email{jcirici@ub.edu}

\address[G. Horel]{Université Sorbonne Paris Nord, Laboratoire Analyse, Géométrie et Applications, CNRS (UMR 7539), 93430, Villetaneuse, France.}
\email{horel@math.univ-paris13.fr}

\thanks{P. B. was supported by FCT 2021.01497.CEECIND. 
J. C. acknowledges
Govern de Catalunya (2021-SGR-00697 and Serra H\'{u}nter Program) and the Spanish State Research Agency (CEX2020-001084-M and PID2020-117971GB-C22). J.C. and G.H. are funded by the  Agence Nationale pour la Recherche through project ANR-20-CE40-0016 HighAGT and by the Centre National pour la Recherche Scientifique through project IEA00979. P.B. and G.H. are supported by a project PHC Pessoa.
}


\begin{document}

\begin{abstract} 
The little $n$-disks operad is $SO(n)$ and $O(n)$-equivariantly formal over the rationals. Equivalently, the oriented and unoriented framed little disks operads are rationally formal as $\infty$-operads.
\end{abstract}
\maketitle

\tableofcontents

The little disks operad $\mathcal{D}_n$ of dimension $n$ has a natural action of the orthogonal group $O(n)$, by rotating disks. In other words, $\mathcal{D}_n$ is an operad in $O(n)$-spaces. In this paper, we show that it is formal as such. To explain what this means precisely, recall that the equivariant homotopy type of a space $X$ equipped with an action of a group $G$ can be recorded in the Borel construction $X_{hG}$ together with its canonical map to $BG=*_{hG}$. The Borel construction defines an equivalence of $\infty$-categories from $G$-spaces (up to weak equivalence) to spaces over $BG$. In particular, applying the Borel construction aritywise to an operad $\mathcal{P}$ in $G$-spaces produces an operad in the category of spaces over $BG$. The product in the latter category is the fiber product over $BG$. Very concretely, the operadic structure is given by composition maps (over $BG$)
\[\circ_i:\mathcal{P}(n)_{hG}\times_{BG}\mathcal{P}(m)_{hG}\to\mathcal{P}(n+m-1)_{hG}\]
satisfying the usual axioms.

For a finite type nilpotent space $X$, the rational localization of $X$ can be recovered from $\Aa_{\pl}(X)$, where
$\Aa_{\pl}$ denotes Sullivan's functor of piece-wise linear forms from spaces to commutative dg-algebras.
When trying to define formality of operads from this perspective, one runs into the problem that $\Aa_{\pl}$ is not symmetric monoidal on the nose. Instead, there is a natural map
\[\Aa_{\pl}(X)\otimes \Aa_{\pl}(Y) \to\Aa_{\pl}(X\times Y)\]
which is a quasi-isomorphism. This implies that, if $\mathcal{P}$ is a topological operad, $\Aa_{\pl}(\mathcal{P})$ is not an operad (in the opposite of the category of commutative dg-algebras) on the nose but only \emph{up to coherent homotopies}. There are known ways to resolve or circumvent this. A convenient way is using the theory of $\infty$-operads through the dendroidal formalism \cite{moerdijkdendroidal}. Given a topological operad $\mathcal{P}$, one may form its dendroidal nerve which is a functor $N_d \mathcal{P}:\cat{Tree}^{\op}\to\cat{S}$ that captures all the homotopical information of the operad $\mathcal{P}$. Here $\cat{Tree}$ is the dendroidal category, also denoted $\Omega$ in the literature. We then say that an operad $\mathcal{P}$ is \emph{rationally formal} if $\Aa_{\pl}(N_d\mathcal{P})$ is formal as a diagram from $\cat{Tree}$ to commutative dg-algebras (Definition \ref{defi : formality}). This notion translates well to the equivariant setting. If our operad $\mathcal{P}$ comes with an action of a topological group $G$, we say that it is \textit{equivariantly formal} if the diagram $\Aa_{\pl}(N_d(\mathcal P_{hG}))$ from $\cat{Tree}$ to commutative dg-algebras is formal. Here $\mathcal{P}_{hG}$ is the operad in the category of spaces over $BG$ described above. In fact, its dendroidal nerve $N_d(\mathcal P_{hG})$ agrees with $(N_d\mathcal{P})_{hG}$, the homotopy orbits of the $\infty$-operad associated to $\mathcal{P}$. Having explained the terminology, we now state the main theorem.

\begin{theorem*}Let $n\geq 3$, let $G$ be $O(n)$ or $SO(n)$.
The little $n$-disks operad $\Dd_n$ is $G$-equivariantly formal.
\end{theorem*}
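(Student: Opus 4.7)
The plan is to equip the dendroidal diagram
\[\Aa_{\pl}(N_d((\Dd_n)_{hG})) : \cat{Tree}^{\op} \to \cat{CDGA}_{\QQ}\]
with an auxiliary weight structure---either a mixed Hodge structure or a Galois action on a suitable profinite completion---and to deduce formality from a purity criterion. This is a familiar strategy for operadic formality in the unframed case (Kontsevich, Tamarkin, Petersen, Fresse--Willwacher); the novelty here is to carry it out $G$-equivariantly.

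First, I would look for an algebro-geometric model of the Borel construction $\Dd_n(k)_{hG}$ that is natural with respect to operadic composition. A natural candidate is to model $\Dd_n(k)$ by a Fulton--MacPherson-style compactification of $\Conf_k(\RR^n)$ carrying its algebraic $G$-action, and to model $BG$ by the simplicial bar construction on $G$ as a reductive algebraic group over $\QQ$. The Borel construction then becomes the Betti realization of a simplicial smooth $\QQ$-variety, whose cohomology is automatically endowed with a mixed Hodge structure (or, $\ell$-adically, a Galois action).

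Second, I would transport this extra structure through the dendroidal nerve to show that $\Aa_{\pl}(N_d((\Dd_n)_{hG}))$ lifts to a diagram of weight-filtered commutative dg-algebras. The key property to verify is \emph{purity}: the cohomology in degree $p$ should be concentrated in a single weight depending on $p$. Pointwise, this should follow from known purity of $H^*(\Dd_n(k);\QQ)$ (the $n$-Gerstenhaber/Poisson structure) combined with purity of $H^*(BG;\QQ)$ (a polynomial ring on Pontryagin and, for even $n$, Euler classes), together with degeneration of the Leray--Serre spectral sequence of the Borel fibration. Once purity of the whole dendroidal diagram is in hand, the formality-from-purity criterion in its operadic/dendroidal form (Guillén~Santos--Navarro--Pascual--Roig, Cirici--Horel) yields the desired formality automatically.

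The hard part will be ensuring compatibility of the operadic composition with the extra structure. The composition map
\[\circ_i : \Dd_n(k)_{hG} \times_{BG} \Dd_n(m)_{hG} \to \Dd_n(k+m-1)_{hG}\]
is continuous but not manifestly algebraic, and the fiber product over $BG$ requires a coherent interpretation in the algebro-geometric model. A promising route is to construct a $G$-equivariant enhancement of the Fulton--MacPherson operad in the category of simplicial smooth $\QQ$-varieties, whose realization yields the dendroidal nerve equipped with weight data by construction. A further subtlety is that for $n$ odd, $\RR^n$ does not carry a complex structure compatible with the $O(n)$-action, so the algebraic model may have to be constructed via real algebraic geometry or étale/motivic means rather than complex Hodge theory; the hypothesis $n\geq 3$ guarantees simple connectivity of $\Dd_n(k)$, keeping the Sullivan machine applicable uniformly.
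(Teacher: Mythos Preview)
Your strategy has a genuine obstruction that the paper isolates explicitly. An algebro-geometric or Galois model of $\Conf_k(\RR^n)$ naturally carries an action of $GL_{n/2}(\CC)\simeq U(n/2)$ (or nothing useful when $n$ is odd), not of $O(n)$: no complex structure on $\RR^n$ is $O(n)$-invariant, so the Frobenius/Hodge weights you would obtain are only compatible with the maximal torus and the subgroup $\Sigma_{n/2}$ of the Weyl group, not with the complex-conjugation generators of $\Weyl\cong\Sigma_{\lfloor n/2\rfloor}\ltimes C_2^{\lfloor n/2\rfloor}$. The paper points out that this is not a technicality one can massage away: the only element of $\mathrm{Gal}(\overline{\QQ}/\QQ)$ commuting with complex conjugation is the identity, so no nontrivial Galois automorphism can be made $O(n)$-equivariant in the required sense. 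Your acknowledgement of trouble in odd dimensions is a symptom of the same phenomenon in even dimensions.

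What the paper does instead is replace the Galois group by the Grothendieck--Teichm\"uller group. Using additivity, $(\Dd_n)_\QQ\simeq((\Dd_2)_\QQ)^{\boxtimes\lfloor n/2\rfloor}(\boxtimes\Dd_1)$, so an element $\varphi\in\GT_\QQ$ acts on $(\Dd_n)_\QQ$; the key new input (Proposition~\ref{prop : dimension2}) is that for any $\psi\in\QQ^\times$ there exists $\varphi\in\GT_\QQ$ with cyclotomic character $\psi$ that \emph{commutes with the involution $\tau=(-1,1)$}. This comes from Drinfeld's existence of even rational associators, and is precisely what makes $\varphi$ compatible with the full Weyl group and hence descend to an automorphism of $((\Dd_n)_\QQ)_{hO(n)}$.

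There is a second, smaller gap. Even granting a weight structure, the purity you sketch does not hold in the form ``$H^p$ is pure of weight $p$'': for even $n$, $\varphi$ acts on $H^i(\Dd_n)$ with weight $\tfrac{n}{n-1}i$ but on $H^i(BG)$ with weight $i$, and the Leray--Serre spectral sequence of the Borel fibration does \emph{not} degenerate at $E_2$ (there is a $d_{2n}$ hitting the Euler class). The paper therefore proves a filtered purity-implies-formality theorem (Theorem~\ref{theoEis1}) keyed to the Leray--Serre filtration, which in the even case only identifies $\Aa_{\pl}((\Dd_n)_{hG})$ with the penultimate page $({}^{\mathrm{LS}}E_{2n},d_{2n})$; a separate explicit argument (Proposition~\ref{penultimate}) then shows that this page is itself formal as a dendroidal diagram.
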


As a consequence, a rational model for the Borel construction of configuration spaces in $\RR^n$ with their $O(n)$ or $SO(n)$ actions is given by their cohomology rings. We compute these in Section \ref{compute}.

The homotopy orbits $(\mathcal{D}_n)_{hG}$ is an $\infty$-operad (a complete dendroidal Segal space, i.e. a fibrant object in a model category describing the homotopy theory of $\infty$-operads). This $\infty$-operad is in fact a fibrant replacement of the framed little disks (Proposition \ref{prop : orbit is framed}). So our main theorem can be rephrased as follows.

\begin{theorem*}
In dimension $n\geq 3$, the framed little disks operad and its unoriented variant are formal as $\infty$-operads.
\end{theorem*}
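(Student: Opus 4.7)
The plan is to deduce this statement from the first main theorem by passing through Proposition \ref{prop : orbit is framed}: the two formulations are essentially translations of each other once one identifies the Borel construction of $\Dd_n$ with the framed little disks operad.

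First, I would apply Proposition \ref{prop : orbit is framed}, according to which the operad $(\Dd_n)_{hG}$, viewed as an $\infty$-operad (a complete dendroidal Segal space), is a fibrant replacement of the framed little disks operad: the oriented variant when $G = SO(n)$, and the unoriented variant when $G = O(n)$. In particular, the dendroidal nerves of $(\Dd_n)_{hG}$ and of the framed little disks are levelwise weakly equivalent as diagrams $\cat{Tree}^{\op} \to \cat{S}$.

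Second, by the preceding theorem, $\Dd_n$ is $G$-equivariantly formal; by the definition adopted in the introduction, this says precisely that $\Aa_{\pl}(N_d((\Dd_n)_{hG}))$ is formal as a diagram from $\cat{Tree}^{\op}$ to commutative dg-algebras. Since $\Aa_{\pl}$ sends weak equivalences of spaces to quasi-isomorphisms, the equivalence of dendroidal nerves above induces a levelwise quasi-isomorphism of diagrams of commutative dg-algebras. Formality of a diagram is preserved under such quasi-isomorphisms, so the $\Aa_{\pl}$-model of the framed little disks operad is also formal. By definition, this is precisely the statement that the framed little disks operad is formal as an $\infty$-operad, in both the oriented and unoriented flavors.

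The main difficulty does not lie in this translation, but in the two inputs it consumes. The first theorem is the substantive new content of the paper, while Proposition \ref{prop : orbit is framed} requires checking that the arity-wise homotopy quotient by $G$ recovers the semidirect-product operad $\Dd_n \rtimes G$ up to equivalence of $\infty$-operads; this is a formal but not entirely trivial manipulation at the level of dendroidal spaces. Once both inputs are granted, the present theorem is obtained by unwinding definitions.
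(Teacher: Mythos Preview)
Your overall strategy matches the paper's: this theorem is a restatement of the first main theorem via Proposition~\ref{prop : orbit is framed}. However, one of your intermediate claims is incorrect and, if taken at face value, would prove too much. You assert that ``the dendroidal nerves of $(\Dd_n)_{hG}$ and of the framed little disks are levelwise weakly equivalent as diagrams $\cat{Tree}^{\op} \to \cat{S}$.'' They are not. Proposition~\ref{prop : orbit is framed} only says that $(\Dd_n)_{hG}$ is the \emph{Rezk completion} of $N_d(f\Dd_n)$, and Rezk completion is not a levelwise weak equivalence: on a linear tree $[k]$, the dendroidal nerve $N_d(f\Dd_n)$ takes the value $G^k$ (the nerve of the one-object groupoid with automorphisms $G$), whereas $(\Dd_n)_{hG}$ takes the constant value $BG$. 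This is precisely the phenomenon highlighted in Remark~\ref{rem : non contradiction}. Your argument, as written, would transfer formality of $\Aa_{\pl}((\Dd_n)_{hG})$ to formality of $\Aa_{\pl}(N_d(f\Dd_n))$ as a diagram, and the latter is known to be \emph{false} in odd dimensions by the results of Khoroshkin--Willwacher and Moriya cited in the paper.

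The correct deduction requires no transfer step. The phrase ``formal as an $\infty$-operad'' refers to formality of $\Aa_{\pl}$ applied to a fibrant (complete Segal) model of the operad. Since $(\Dd_n)_{hG}$ \emph{is} that fibrant model of $f\Dd_n$ by Proposition~\ref{prop : orbit is framed}, the first main theorem---formality of $\Aa_{\pl}(N_d((\Dd_n)_{hG}))$---is already, by definition, the assertion that $f\Dd_n$ is formal as an $\infty$-operad. No levelwise comparison is needed (or available).
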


This constrasts with the results of Khoroshkin and Willwacher \cite{KW} and Moriya \cite{Moriya} who prove that
the framed little disks operad is not formal in odd dimensions as a one-colored operad. This is not a contradiction, but can be explained by the fact that the dendroidal nerve of the framed little disks operad is not an $\infty$-operad (with unrestricted colors): it satisfies the Segal condition but it is not complete (see also Remark \ref{rem : non contradiction}).

We also prove a relative version of the above:

\begin{theorem*}
Let $n\geq 3$ and $c\geq 2$. Let $G=O(n)$ or $G=SO(n)$. The map $\Dd_n\to\Dd_{n+c}$ induced by the standard inclusion of $\RR^n$ in $\RR^{n+c}$ is $G$-equivariantly formal.
\end{theorem*}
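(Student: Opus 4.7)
The plan is to carry out the proof of the absolute equivariant formality theorem directly on the whole morphism of operads, rather than on $\Dd_n$ and $\Dd_{n+c}$ separately. Write $G' = O(n+c)$ (resp.\ $SO(n+c)$), so that the standard inclusion $\iota : \Dd_n \to \Dd_{n+c}$ is equivariant for the subgroup inclusion $G \hookrightarrow G'$. Applying the Borel construction aritywise and pulling the target back along $BG \to BG'$ produces a morphism $\iota_{hG}$ of operads in spaces over $BG$, whose dendroidal nerve is a functor $\cat{Tree}^{\op} \times [1] \to \cat{S}$. It suffices to show that $\Aa_{\pl}$ applied to this diagram is formal as a functor to commutative dg-algebras.

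The technical heart is to check that the formality mechanism furnished by the proof of the main theorem is functorial in $G$-equivariant maps of topological operads. A formality proof of this kind typically produces a zigzag of quasi-isomorphisms between $\Aa_{\pl}(N_d(\mathcal{P}_{hG}))$ and its cohomology by equipping an intermediate model with extra structure --- a weight decomposition, a Galois action, or a mixed Hodge structure --- under which the cohomology is pure and the zigzag is forced. Since $\iota_{hG}$ preserves cohomological degree and the extra structure arises from natural arithmetic or geometric data on the underlying configuration spaces, the maps in the zigzag should automatically assemble into a commuting square along $\iota_{hG}$. The hypothesis $c \geq 2$ plays two roles: it ensures that every arity of $\iota_{hG}$ is a nilpotent space of finite $\QQ$-type to which $\Aa_{\pl}$ applies, and it provides enough room in degrees for the weight separation to distinguish the cohomology generators of $\Dd_{n+c}$ from those of $\Dd_n$ (for $c=1$ the induced map on cohomology has more nontrivial content and the argument would need to be adapted).

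The main obstacle will be verifying that the zigzag produced by the proof of the absolute theorem can be chosen naturally along the whole arrow category, not just at its two endpoints. If that proof proceeds via profinite Galois descent on the étale homotopy type, this amounts to identifying a Galois action on an intermediate model of $\iota_{hG}$ which restricts to the chosen Galois actions on source and target; in the little disks setting this compatibility should follow from the algebro-geometric origin of the models (configuration spaces as complements of diagonals in affine space, and classifying stacks of linear groups), together with the fact that $\iota$ is visibly defined over the base field. Once this naturality is in place, the formal zigzag for $\iota_{hG}$ is obtained by applying the absolute-case machinery uniformly to the arrow $[1]$-diagram, and the relative formality follows.
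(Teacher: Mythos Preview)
Your proposal is not a proof; it is an outline whose hardest step is explicitly left open, and the speculation you offer about how to fill it is off target. Two concrete gaps:

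\medskip
\textbf{The source of the automorphism.} The absolute theorem in this paper is not proved via Galois descent on an \'etale model; the paper even explains in the remark closing Section~5 why that route does not suffice for $O(n)$-equivariance. The automorphism $\varphi$ that drives the purity argument is built from a Grothendieck--Teichm\"uller element (Proposition~\ref{prop : dimension2}) transported through the additivity equivalence. So the sentence ``this compatibility should follow from the algebro-geometric origin of the models'' does not apply: there is no underlying arithmetic Galois action to make the zigzag automatically natural in the map $\iota$.

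\medskip
\textbf{The actual obstruction and its resolution.} Even granting an automorphism of the pair, the purity weights produced by the absolute argument for $\Dd_n$ and for $\Dd_{n+c}$ do \emph{not} match: $H^i(\Dd_n)$ is pure of weight $\alpha i$ for one value of $\alpha$, while $H^i(\Dd_{n+c})$ (using the analogous construction) would be pure for a different $\alpha$. If the two objects do not live in the same category $\FCxp{r}{\QQ[\varphi]}$, Theorem~\ref{theoEis1} cannot be applied to the arrow. The paper fixes this by exploiting additivity once more: one constructs an automorphism of the \emph{map} $(\Dd_n)_\QQ \to (\Dd_{n+c})_\QQ$ as the box product of the automorphism from Proposition~\ref{automorphism} with an automorphism of $(\Dd_c)_\QQ$ of arity-$2$ degree $\nu$. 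The extra parameter $\nu$ is then chosen (specifically $\nu=\xi^{c}$ for $n$ odd, $\nu=\xi^{cn/(n-1)}$ for $n$ even) precisely so that source and target land in the same $\FCxp{r}{\QQ[\varphi]}$. This is the missing idea in your sketch, and it is also the true reason for the hypothesis $c\geq 2$: one needs $(\Dd_c)_\QQ$ to admit automorphisms of arbitrary arity-$2$ degree, which fails for $c=1$.
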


One motivation for such a theorem comes from embedding calculus. Embedding calculus tries to approximate spaces of embeddings between smooth manifolds by studying their effect on disks, i.e. replacing each manifold by the associated right module over the \emph{framed} little disks $\infty$-operad. Under the assumption that the manifolds are \emph{parallelized}, the framed little disks $\infty$-operad can be replaced by the standard little disks $\infty$-operad and, in this passage, the (by now classical) formality \cite{lambrechtsvolic} has been used to great effect, e.g. \cite{aronecalculus,aroneturchin}. The formality results that we prove in this paper should allow us to dispense such parallelizability assumptions and draw non-trivial consequences for the rational homotopy theory of more general embedding spaces.

Our strategy to prove these results is to use a ``purity implies formality'' technique. As input, we consider the action of the Grothendieck-Teichmüller group on the rationalized
little disks operad, built from the additivity theorem. This action is compatible with the action of the maximal torus of $O(n)$ and so induces an action on the rationalized  Borel construction. As a byproduct of our strategy we also obtain rational formality for algebraic group actions on smooth complex projective varieties, using as input the Galois action on étale cohomology. This strategy of using the Grothendieck-Teichmüller action on the two-dimensional little disks operad in order to prove its formality originated in \cite{petersenminimal}. This was then pushed to higher dimensions using the additivity theorem in \cite{boavidaformality}. The novelty of the present paper is the realization that this strategy can be pushed further to the equivariant setting.

\subsection*{Related work}
A different description of the $SO(n)$-action on the real homotopy type of $\Dd_n$ is given in \cite{KW}. The translation between our result and the main result of \cite{KW} does not seem to be straightforward as our ways of modelling equivariant homotopy type of operads are quite different. The computation of Section \ref{compute}, at least with real coefficients, can also be deduced from \cite{KW} as was explained to us by Thomas Willwacher.

\subsection*{Conventions}
All cohomology is with coefficients in $\QQ$, unless stated otherwise.
For $R$ a commutative ring, we use the notation $\cat{D}(R)$ for the $\infty$-categorical derived category of $R$ which can be defined as the localization of the category of cochain complexes over $R$ at the quasi-isomorphisms.

\section{Purity and filtered formality}\label{darkmagic}
In this section we prove abstract results on filtered complexes, generalizing the results of $E_r$-formality of filtered commutative dg-algebras of \cite{CG1} and of symmetric monoidal functors of \cite{CiHo1} in the setting of mixed Hodge structures. The main idea is that, given a filtered commutative dg-algebra $A$ together with an automorphism $\varphi$, the purity conditions on the eigenvalues of $\varphi^*$ acting on a certain page of its associated spectral sequence give a quasi-isomorphism of commutative dg-algebras between $A$ and the penultimate page of its associated spectral sequence, viewed as a commutative dg-algebra. We prove this functorially.

Throughout this section we fix a unit $\xi$ in $\QQ$ different from $\pm 1$.

\begin{definition}
Let $V$ be a $\QQ[\varphi]$-module $V$ which is finite dimensional as a $\QQ$-vector space and let $i$ be an integer. We say that $V$ is \textit{pure of weight
$i$} if either $V=0$ or $i$ is an integer and the only eigenvalue of $\varphi$ is $\xi^i$.
\end{definition}

\begin{proposition}\label{prop : Hom is zero}
Let $V$ and $V'$ be two complexes of $\QQ[\varphi]$-modules. Assume that, for all $k\geq 0$,
\begin{enumerate}
\item $H^k(V)$ and $H^k(V')$ are of finite dimension, and
\item $H^k(V)$ is pure of weight $i$ and $H^k(V')$ is pure of weight $i'$.
\end{enumerate}
Then, if $i\neq i'$, we have
$\Map_{\D(\QQ[\varphi])}(V,V')\simeq 0$.
\end{proposition}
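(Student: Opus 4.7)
The plan is to exploit that $\QQ[\varphi]$ is a principal ideal domain---in particular hereditary (global dimension $\leq 1$)---in order to reduce the derived statement to an algebraic $\mathrm{R}\Hom$ computation between pure modules of distinct weights.

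First, I would reduce to showing that $\mathrm{R}\Hom_{\QQ[\varphi]}(M, N) \simeq 0$ whenever $M$ and $N$ are finite-dimensional $\QQ[\varphi]$-modules pure of weights $i$ and $i'$ with $i \neq i'$. Since $\QQ[\varphi]$ is hereditary, every bounded complex of $\QQ[\varphi]$-modules splits in the derived category as the direct sum of its cohomology: the successive obstructions in the Postnikov tower live in $\mathrm{Ext}^{\geq 2}$ groups and therefore vanish. Hence $V \simeq \bigoplus_k H^k(V)[-k]$ and similarly for $V'$, giving
\[
\Map_{\D(\QQ[\varphi])}(V, V') \simeq \prod_{k, k'} \mathrm{R}\Hom_{\QQ[\varphi]}(H^k(V), H^{k'}(V'))[k-k'].
\]

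Next, I would use that the $\QQ[\varphi]$-module structure on $\mathrm{R}\Hom(M, N)$ can be realized equivalently by $\varphi$ acting on the source $M$ (via precomposition) or on the target $N$ (via postcomposition), since any map in $\D(\QQ[\varphi])$ is by definition $\QQ[\varphi]$-linear. By purity, $(\varphi - \xi^i)$ is nilpotent on $M$, so there is an integer $m$ with $(\varphi - \xi^i)^m \cdot M = 0$; the induced endomorphism of $\mathrm{R}\Hom(M, N)$ is therefore zero. A symmetric argument shows that $(\varphi - \xi^{i'})^m$ also annihilates $\mathrm{R}\Hom(M, N)$.

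Finally, the hypothesis $\xi \neq \pm 1$ ensures that $\xi$ is not a root of unity in $\QQ^{\times}$, so $\xi^i \neq \xi^{i'}$ whenever $i \neq i'$. Consequently $(T - \xi^i)^m$ and $(T - \xi^{i'})^m$ are coprime in $\QQ[T]$, and a Bezout identity $1 = a(\varphi - \xi^i)^m + b(\varphi - \xi^{i'})^m$ in $\QQ[\varphi]$ forces the identity of $\mathrm{R}\Hom(M, N)$ to be zero, whence $\mathrm{R}\Hom(M, N) \simeq 0$. The main subtlety is cleanly carrying out the reduction step when the cohomology of $V$ or $V'$ is not concentrated in finitely many degrees; this I would handle by running a hyperext spectral sequence whose $E_2$-page is the product of the $\mathrm{R}\Hom$ groups just shown to vanish, and whose convergence is guaranteed by the hypothesis that each $H^k$ is finite-dimensional.
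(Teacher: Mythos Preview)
Your proof is correct and follows essentially the same route as the paper: both use that $\QQ[\varphi]$ has global dimension $1$ to reduce to modules concentrated in a single degree, and then conclude by a coprimality argument. The only cosmetic difference is that the paper further decomposes each pure module into cyclic pieces $\QQ[\varphi]/(f)$ via the structure theorem and invokes the general fact $\Map_{\D(R)}(R/f,R/g)=0$ for coprime $f,g$, whereas you run the Bezout argument directly on the pure modules via nilpotency of $(\varphi-\xi^i)$ and $(\varphi-\xi^{i'})$; your version is marginally more economical since it bypasses the structure theorem.
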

\begin{proof}
The ring $\QQ[\varphi]$ is of homological dimension $1$ so we may assume that $V$ and $V'$ have cohomology concentrated in a single degree which can even be assumed to be zero up to shifting.
The modules $V$ and $V'$ then split as a direct sum of factors of the form $\QQ[\varphi]/(f)$ in which the roots of $f$ are $\xi^i$  and $\xi^{i'}$ respectively. The result now follows from the general fact that for coprime elements $f$ and $g$ in a
principal ideal domain $R$, we always have
$\Map_{\D(R)}(R/f,R/g)=0.$
\end{proof}

The following is an abstract statement of the fact that purity implies formality.
\begin{theorem}\label{purepurity}Let $\alpha\neq 0$ be a rational number.
When restricted to cochain complexes $A$ in $\D(\QQ[\varphi]))$ whose cohomology $H^i(A)$ is pure of weight $\alpha i$,
there is
an equivalence of strong symmetric monoidal $\infty$-functors $H\simeq \mathrm{Id}$, where
$H:\D(\QQ[\varphi])\to \D(\QQ[\varphi])$  is the cohomology functor.
\end{theorem}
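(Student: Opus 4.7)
The plan is to construct, on the full sub-$\infty$-category $\D^{\mathrm{pure}}_\alpha \subset \D(\QQ[\varphi])$ of complexes whose $i$-th cohomology is pure of weight $\alpha i$, a strong symmetric monoidal natural equivalence $H \simeq \mathrm{Id}$. First I would observe that $\D^{\mathrm{pure}}_\alpha$ is closed under the tensor product $\otimes_\QQ$: by the K\"unneth theorem over the field $\QQ$, the cohomology of $A \otimes B$ in degree $n$ is $\bigoplus_{p+q=n} H^p(A) \otimes H^q(B)$, on which $\varphi$ acts with eigenvalue $\xi^{\alpha p} \xi^{\alpha q} = \xi^{\alpha n}$. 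The same K\"unneth isomorphism makes $H$ strong symmetric monoidal, so both $H$ and $\mathrm{Id}$ restrict to strong symmetric monoidal $\infty$-functors from $\D^{\mathrm{pure}}_\alpha$ to $\D(\QQ[\varphi])$.

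The key step is a natural pointwise equivalence $A \simeq H(A)$ obtained by splitting the Postnikov tower of $A$. For each $n$, consider the fiber sequence
\[
\tau_{\leq n-1} A \longrightarrow \tau_{\leq n} A \longrightarrow H^n(A)[-n].
\]
By induction on $n$ I may assume $\tau_{\leq n-1} A \simeq \bigoplus_{i < n} H^i(A)[-i]$ canonically. The space of splittings of the quotient map is the homotopy fiber of $\Map(H^n(A)[-n], H^n(A)[-n]) \to \Map(H^n(A)[-n], \tau_{\leq n-1} A[1])$ over $\mathrm{id}$; the target decomposes as $\prod_{i < n} \Map(H^n(A)[-n], H^i(A)[-i+1])$, and each factor is contractible by Proposition~\ref{prop : Hom is zero}, since $H^n(A)[-n]$ has all cohomology pure of weight $\alpha n$ while $H^i(A)[-i+1]$ has all cohomology pure of weight $\alpha i$, and $\alpha n \neq \alpha i$ when $\alpha \neq 0$. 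The indeterminacy $\Map(H^n(A)[-n], \tau_{\leq n-1} A)$ vanishes by the same argument, so the lift is canonical, giving a natural equivalence $\tau_{\leq n} A \simeq \tau_{\leq n-1} A \oplus H^n(A)[-n]$.

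To upgrade this to a symmetric monoidal equivalence, I would consider the full sub-$\infty$-category $\D^{\mathrm{formal}}_\alpha \subset \D^{\mathrm{pure}}_\alpha$ whose objects are direct sums $\bigoplus_i V_i[-i]$ with $V_i$ pure of weight $\alpha i$. The previous paragraph shows the inclusion is essentially surjective; it is automatically symmetric monoidal because the tensor product of two complexes with zero differential again has zero differential, keeping us inside $\D^{\mathrm{formal}}_\alpha$. On $\D^{\mathrm{formal}}_\alpha$ the two functors $\mathrm{Id}$ and $H$ coincide tautologically, as symmetric monoidal functors. Transporting along the symmetric monoidal equivalence $\D^{\mathrm{formal}}_\alpha \simeq \D^{\mathrm{pure}}_\alpha$ yields the desired equivalence $H \simeq \mathrm{Id}$ of strong symmetric monoidal $\infty$-functors.

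The main obstacle will be the coherence bookkeeping implicit in the last paragraph, namely verifying that the inclusion of the formal subcategory is an equivalence of symmetric monoidal $\infty$-categories. This is controlled by the same vanishing phenomenon as above: at every coherence level the obstruction spaces are (iterated) products of mapping spaces between pure complexes of distinct weights, which vanish by Proposition~\ref{prop : Hom is zero}, so every required space of choices is either empty or contractible. Non-emptiness is guaranteed by the explicit pointwise splitting, and uniqueness-up-to-contractible-choice then delivers all required monoidal coherences automatically.
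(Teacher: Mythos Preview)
Your Postnikov-tower argument correctly produces, for each object $A$, an equivalence $A \simeq H(A)$ with contractible space of choices; the key input is Proposition~\ref{prop : Hom is zero}, exactly as in the paper. The difficulty is in your third paragraph. As written, the full sub-$\infty$-category $\D^{\mathrm{formal}}_\alpha$ of $\D^{\mathrm{pure}}_\alpha$ \emph{equals} $\D^{\mathrm{pure}}_\alpha$ itself (you have just shown every object is formal), so the assertion that $H$ and $\mathrm{Id}$ ``coincide tautologically'' on it is exactly the theorem you are trying to prove. What is actually needed is that the functor from the ordinary $1$-category of graded pure modules into $\D^{\mathrm{pure}}_\alpha$ is \emph{fully faithful}, not merely essentially surjective; only then does the identity $H \circ U = \mathrm{Id}$ force $U \circ H \simeq \mathrm{Id}$. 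The contractibility heuristic in your last paragraph does not supply this: the space of equivalences $A \simeq H(A)$ is a torsor for $\prod_i \Aut(H^i(A))$, which is not contractible, so one cannot appeal to uniqueness of the pointwise splitting to get naturality for free.

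The paper takes a route that sidesteps this issue. It deduces the theorem from the more general Theorem~\ref{theoEis1} on filtered complexes, by endowing $A$ with its canonical truncation filtration $\tau$, so that $E_0(A,\tau) \simeq H(A)$. The $r=0$ case of Theorem~\ref{theoEis1} is proved by showing directly that the associated-graded functor $Gr$ is fully faithful on the pure subcategory $\FCxp{0}{\QQ[\varphi]}$, via an inductive reduction on the filtration and the same vanishing from Proposition~\ref{prop : Hom is zero}. Once $Gr$ is fully faithful, the chain $Gr \circ E_0 = Gr \circ U \circ Gr \cong Gr$ immediately yields $E_0 \simeq \mathrm{Id}$ as symmetric monoidal functors, with no further coherence bookkeeping. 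Your approach can be repaired along the same lines: use your Postnikov splitting and Proposition~\ref{prop : Hom is zero} to compute $\Map_{\D(\QQ[\varphi])}(A,B) \simeq \prod_i \Hom(H^i A, H^i B)$, i.e.\ to show that $H$ is fully faithful on $\D^{\mathrm{pure}}_\alpha$; the rest then follows formally as in the paper.
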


Although this result can be proven directly, we will give a proof in a more general setting of filtered complexes. We start by stating some facts about filtered complexes that shall be used in what follows. We use an $\infty$-categorical language following the treatment of Antieau's paper \cite{antieauspectral}. For us a filtered complex over a base commutative ring $R$ is a diagram in the $\infty$-category $\cat{D}(R)$ indexed by the poset $(\mathbb{N},\leq )$. We denote this $\infty$-category by $\FCx{0}{R}$.

We shall often use the notation $(A,W)$ to denote a filtered complex, or just $A$ when there is no ambiguity.  The letter $A$ stands for the underlying complex, i.e. the colimit of the diagram and $W_iA$ is our notation for the $i$-th term of the filtration, so that our diagram looks like $W_0A\to W_1A\to\ldots$

We will be using the canonical filtration $\tau$ on $\cat{D}(\QQ)$. This is the filtration whose $i$-th term is given by the $-i$-connective cover (remember that we are using cohomological convention). Explicitly, for a cochain complex $C$, we have
 \[
\tau_iC^n=\left\{
\begin{array}{ll}
0&,i<n\\
\Ker(d)\cap C^n&,i=n\\
C^n&,i>n
\end{array}
\right..
\]

Note that since $\mathbb{N}$ is a directed category, one can build the projective model structure on the $1$-category $\cat{Fun}(\mathbb{N},\cat{Ch}^*(R))$ that presents the $\infty$-category $\FCx{0}{R}$. In this model structure a cofibrant object is a diagram in which each transition map is a cofibration in the standard model structure on cochain complexes. In particular, the homotopy type of any filtered complex in the $\infty$-categorical sense can be represented by a strict diagram of injective maps of complexes, that is a filtered complex in the classical sense (or a strict filtered complex in Antieau's terminology).

Given a filtered complex, one can take the homotopy cofiber of each of the transition map and obtain a collection of complexes indexed by $\mathbb{N}$ which is well-defined up to quasi-isomorphism. This defines a functor
\[\mathrm{Gr}:\FCx{0}{R}\to\cat{D}(R)^{\mathbb{N}}.\]
An easy induction shows that a map $f$ is an equivalence in $\FCx{0}{R}$ if and only if $\mathrm{Gr}(f)$ is one. Note that in our setting, filtrations are automatically complete since they are $\mathbb{N}$-indexed instead of $\mathbb{Z}$-indexed.

A filtered complex gives rise to a spectral sequence whose $E_1$-page contains the cohomology groups of the graded pieces of the filtered complex. It follows that a map of filtered complexes is an equivalence in $\FCx{0}{R}$ if and only if it induces an isomorphism on the $E_1$-page of the spectral sequence. More generally, we can define $E_r$-quasi-isomorphisms to be those maps of filtered complexes that induce isomorphisms on the $E_{r+1}$-page of the spectral sequence. Inverting such maps defines a localization of $\FCx{1}{R}$ denoted $\FCx{r}{R}$. In fact, the décalage functor relates the various localizations. In the classical setting, the
 décalage of a filtered complex $(A, W)$ is the filtered complex defined by
 \[\Dec W_i A^n:=W_{i-n}A^n\cap d^{-1}(W_{i-n-1}A^{n+1}).\]
 This is extended to the $\infty$-categorical setting in \cite[Construction 4.5]{antieauspectral}.
Note that the spectral sequences of a filtered complex and its décalage are related by a shift on their pages:
 there is a natural quasi-isomorphism of bigraded complexes
 \[E_0^{-i,j}(\Dec A)\lra E_1^{j-2i,i}(A)\]
inducing isomorphisms at the later pages (see \cite{DeHII}).

\begin{theorem}\label{theo : decalage is an equivalence}
Let $r\geq 0$. The décalage functor defines a symmetric monoidal equivalence of $\infty$-categories
\[\Dec:\FCx{r+1}{R}\to\FCx{r}{R}\]
\end{theorem}

\begin{proof}
This is essentially the content of Theorem 2.19 of \cite{CG2}. Using the discussion above, it follows that the $\infty$-category $\FCx{r}{R}$ may be obtained from the $1$-category of filtered complexes by inverting the $E_r$-quasi-isomorphisms. Finally Antieau compares his definition of décalage to Deligne's in \cite[Corollary 5.5]{antieauspectral}. The fact that $\Dec$ is symmetric monoidal follows from  \cite[Lemma 8.7]{antieauspectral}.
\end{proof}

The various pages of the spectral sequences are obviously cochain complexes. They are in fact filtered cochain complexes using the ``column filtration''
\[W_pE_r(-)^{n}:=\bigoplus_{i\leq p}E_r^{-i,n+i}(-).\]
Thus, we obtain a collection of symmetric monoidal functors
\[E_{r+i}:\FCx{r}{R}\to\FCx{0}{R},\;\textrm{for}\;i\geq 0\]
but it should be noted that, when $i>0$, these functors admit a preferred lift to the $1$-category  of filtered cochain complexes in the classical sense.

Let $\alpha\neq 0$ be a rational number. Denote $\FCxp{0}{\QQ[\varphi]}$ the full subcategory of $\FCx{0}{\QQ[\varphi]}$ spanned by those filtered complexes $(A,W)$ such that:
\begin{enumerate}[(i)]
  \item For all $i\geq 0$, $H^*(Gr_iA)$ is finite-dimensional in each cohomological degree.
  \item  If $\alpha i$ is an integer, then $E_1^{-i,j}(A)$ is pure of weight $\alpha i$. Otherwise $E_1^{-i,j}(A)=0$.
 \end{enumerate}

 For $r\geq 0$, denote by $\FCxp{r}{\QQ[\varphi]}$ the full subcategory of $\FCx{r}{\QQ[\varphi]}$
 spanned by objects $A$ such that $\Dec^r(A)$ lies in $\FCxp{0}{\QQ[\varphi]}$.
\begin{remark}\label{LisbonGate}
 A sufficient condition for a filtered complex of $\QQ[\varphi]$-modules to lie in
 $\FCxp{r}{\QQ[\varphi]}$ is that
  $E_{r+1}^{-i,j}(A)$ is finite-dimensional and pure of weight $\alpha ((1-r)i+jr)$ whenever this number is an integer, and zero otherwise. Of course this property is stable under turning pages in the spectral sequence so it suffices for it to be satisfied at some page $E_\ell$ with $\ell\leq r+1$.
\end{remark}

\begin{theorem}\label{theoEis1}
When restricted to $\FCxp{r}{\QQ[\varphi]}$, there is an equivalence of symmetric monoidal $\infty$-functors $E_r\simeq \mathrm{Id}$.
\end{theorem}

\begin{proof}
We will proceed by induction over $r\geq 0$.

The base case of the induction is $r=0$. In this case, consider the two functors
\[\mathrm{Gr}:\FCxp{0}{\QQ[\varphi]}\leftrightarrows \mathrm{D}(\QQ[\varphi])^{\mathbb{N}}:U\]
where $\mathrm{Gr}$ is the associated graded functor and $U$ is the functor that takes a graded complex $\{A_k\}_{k\in\mathbb{N}}$ to the filtered complex $(UA,W)$ given by
\[W_iU(A):=\bigoplus_{k\leq i}A_k\]
with obvious transition maps. We claim that the functor $\mathrm{Gr}$ is fully faithful. Assuming this for the moment, the result can be deduced as follows. In order to prove that $E_0$ is equivalent to $\mathrm{Id}$, it suffices to prove that $\mathrm{Gr}\circ E_0$ is equivalent to $\mathrm{Gr}$. The functor $E_0$ is simply the composite $U\circ \mathrm{Gr}$, since $\mathrm{Gr}\circ U\cong\mathrm{Id}$, the result follows immediatly.

Now, we prove that $\mathrm{Gr}$ is fully faithful. Let $C$ and $D$ be two objects in $\FCxp{0}{\QQ[\varphi]}$ whose filtrations are both denoted by $W$. We wish to prove that the map
\[\Map_{\FCxp{0}{\QQ[\varphi]}}(C,D)\to\prod_i \Map_{\D(\QQ[\varphi])}(\mathrm{Gr}_i(C),\mathrm{Gr}_i(D))\]
is an equivalence.

We view both the source and target as functors of $C$ and $D$. Clearly, both the source and target preserve colimits in the $C$ variable. We have an equivalence
\[C\simeq \mathrm{colim}_i W_iC\]
where $W_iC$ is equipped with the induced filtration~:
\[W_0C\to\ldots\to W_{i-1}C\to W_iC\xrightarrow{\id}W_iC\xrightarrow{\id}\ldots\]
It follows that we can reduce the proof to the case where $C$ has a finite filtration. Now, we observe that we have a cofiber sequence
\[W_{i-1}C\to W_iC\to \mathrm{Gr}_iC\]
which can be viewed as a cofiber sequence of filtered objects in which $Gr_iC$ is equipped with the filtration
\begin{align*}
W_j (\mathrm{Gr}_iC)&=\mathrm{Gr}_iC\;\textrm{if}\;i\leq j\\
         &=0\;\textrm{else.}
\end{align*}
From this observation, we see that we may reduce to the case in which $C$ is
such that $W_pC=0$ for $p<i$ and $W_pC=\mathrm{Gr}_i(C)$ for $p\geq i$.
For this specific $C$, we see that the map that we are considering is simply
\[\Map_{\D(\QQ[\varphi])}(\mathrm{Gr}_iC,W_iD)\to \Map_{\D(\QQ[\varphi])}(\mathrm{Gr}_iC,\mathrm{Gr}_iD)\]
In order to prove that this map is an equivalence, it is enough to prove that its fiber is zero. This fiber is simply the mapping space $\Map_{\D(\QQ[\varphi])}(\mathrm{Gr}_iC,W_{i-1}D)$. By a similar inductive argument, we may reduce to proving that
\[\Map_{\D(\QQ[\varphi])}(\mathrm{Gr}_iC,\mathrm{Gr}_{k}D)=0\]
for all $k<i$. This follows from the proposition \ref{prop : Hom is zero} and finishes the proof in the case $r=0$.

Now take $r>0$ and assume the result has been proved for $r-1$, then we  use Theorem \ref{theo : decalage is an equivalence}. We obtain an equivalence
\[E_r\simeq \Dec^{-1}\circ E_{r-1}\circ \Dec \simeq \Dec^{-1}\circ \mathrm{Id}\circ \Dec\simeq \mathrm{Id}. \qedhere\]
\end{proof}

\begin{proof}[Proof of Theorem \ref{purepurity}]
Given a complex $A$, endow it with the canonical filtration $\tau$. Then, the purity condition on $H^*(A)$
makes $(A,\tau)$ into an object in $\FCxp{0}{\QQ[\varphi]}$. Therefore $(E_0(A),d_0)$ is naturally quasi-isomorphic to $A$. But since $\tau$ is the canonical filtration, we have $E_0(A)\simeq H(A)$.
\end{proof}

\section{Equivariant homotopy theory}

\subsection{Borel construction and rational equivariant homotopy theory}
For a topological group $G$ acting on a space $X$, one may form the Borel construction $X_{hG}$. This is a space with a map to $BG$. Explicitly, this can be constructed as
\[X_{hG}:=(X\times EG)/G\to (*\times EG)/G:=BG\]
where $EG$ denotes any contractible CW complex with free $G$-action and the quotient is taken with respect to the diagonal $G$-action. It is classical that the homotopy type of $X$ with its $G$ action can be reconstructed from this map. Indeed, we have a fiber sequence
\[X\to X_{hG}\to BG\]
and the monodromy action of $G\simeq \Omega BG$ coincides with the $G$-action on $X$. This can be expressed by the following theorem.

\begin{theorem}
The Borel construction defines an equivalence of $\infty$-categories
\[\Fun(BG,\cat{S})\to \cat{S}/BG,\]
where $\cat{S}$ denotes the $\infty$-category of spaces.
\end{theorem}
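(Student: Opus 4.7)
The plan is to view this as an instance of Lurie's straightening/unstraightening equivalence. For any simplicial set $S$, straightening provides an equivalence of $\infty$-categories between the $\infty$-category of left fibrations over $S$ and $\Fun(S,\cat{S})$, sending a left fibration $p\colon E\to S$ to the functor $s\mapsto p^{-1}(s)$ assigning to each vertex its fiber. I would take this as a black box.

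Next, I would identify $\cat{S}/BG$ with the $\infty$-category of left fibrations over $BG$. Since $BG$ is a Kan complex, every edge is invertible, so a map of Kan complexes $E\to BG$ is a left fibration if and only if it is a Kan fibration. Any map from a space to $BG$ is equivalent to a Kan fibration via the standard fibrant replacement in the covariant model structure on $\mathrm{sSet}/BG$, so the slice $\cat{S}/BG$ is modeled by Kan fibrations over $BG$, hence coincides with the $\infty$-category of left fibrations over $BG$. Composing with straightening produces the desired equivalence $\Fun(BG,\cat{S})\to\cat{S}/BG$.

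It remains to check that this equivalence is implemented by the Borel construction, as the statement suggests. Under unstraightening, a functor $F\colon BG\to\cat{S}$ corresponds (up to equivalence) to the map $\colim_{BG}F\to\colim_{BG}*=BG$, where the colimit is taken in $\cat{S}$. When $F$ encodes a $G$-action on a space $X$, the homotopy colimit $\colim_{BG}F$ is precisely $X_{hG}$, because $EG\to *$ is a cofibrant replacement of the constant diagram at $*$ over $BG$, so the weighted colimit formula recovers $(X\times EG)/G$. I do not expect any serious obstacle: the most delicate point is the matching between the explicit Borel construction and the formal $\infty$-categorical homotopy colimit, which is standard bookkeeping once one has set up straightening.
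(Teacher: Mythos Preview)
Your argument is correct. The paper does not actually prove this theorem: it is stated as a classical fact, with no proof supplied beyond the preceding remark that the monodromy action of $\Omega BG$ on the fiber recovers the $G$-action. So there is no ``paper's proof'' to compare against.

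Your route via straightening/unstraightening is the standard one and each step is sound: over a Kan complex the covariant model structure on $\mathrm{sSet}/BG$ has the same weak equivalences as the slice Kan--Quillen structure, so $\cat{S}/BG$ is presented by left fibrations over $BG$; and for an $\infty$-groupoid base the total space of the unstraightening of $F$ is the homotopy colimit $\colim_{BG}F$, which for the functor encoding a $G$-space $X$ is $X_{hG}$. The only thing I would tighten is the phrasing ``a map of Kan complexes $E\to BG$ is a left fibration if and only if it is a Kan fibration'': the point you need is the equivalence of the two model structures (or of their underlying $\infty$-categories), not merely the coincidence of fibrant objects, but you essentially say this in the next sentence.
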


Given a $G$-space $X$, it is customary to denote by $H^*_G(X)$ the cohomology of the Borel construction. This is called the \textit{equivariant cohomology} of $X$. Likewise,
the \textit{rational equivariant homotopy type} is defined to be the homotopy type of the map of commutative dg-algebras
\[\Aa_{\pl}(BG)\to \Aa_{\pl}(X_{hG}).\]

The following definition is fundamental in this paper. We denote by $\cat{CDGA}$ the $1$-category of commutative differential graded algebras. Observe that a commutative graded algebra can be viewed as an object of $\cat{CDGA}$ by giving it the trivial differential.

\begin{definition}\label{defi : formality}
Let $I$ be a small category. A diagram $I\to\cat{CDGA}$ is called \emph{formal} if there is a zig-zag of natural transformations connecting this diagram to its cohomology in which each map is objectwise a quasi-isomorphism.
\end{definition}

\begin{remark}
There are several possible variants of this definition that are more or less strict. They are in fact all equivalent thanks to the following chain of equivalences of $\infty$-categories:
\[\cat{Fun}(I,\cat{CDGA}_{\QQ})[\mathcal{W}^{-1}]\simeq \cat{Fun}(I,\cat{CDGA}_{\QQ}[\mathcal{W}^{-1}])\simeq \cat{Fun}(I,\cat{CAlg}(\cat{D}(\QQ)))\]
The first of these is the localization of the $1$-category of functors at the objectwise quasi-isomorphisms, the second one is the $\infty$-category of functors from $I$ to the localization of $\cat{CDGA}$ at the quasi-isomorphisms, finally the last one is the $\infty$-category of functors from $I$ to the $\infty$-category of commutative algebras in $\cat{D}(\QQ)$.  The first equivalence follows from \cite[Proposition 4.2.4.4]{HTT} and the second one follows from \cite[Theorem 4.1.1]{hinichrectification}. Then, a diagram is formal in the sense of Definition \ref{defi : formality} if and only if it is equivalent to its cohomology when both are viewed as objects of the leftmost $\infty$-category, but it is in fact enough to prove that they are equivalent as objects of the rightmost category.
\end{remark}

As particular case of the above definition we have the following.

\begin{definition}
We say that a $G$-space $X$ is \textit{equivariantly formal} if the map
\[\Aa_{\pl}(BG)\to \Aa_{\pl}(X_{hG})\]
is formal when we view it as a functor from the poset $[1]=\{0<1\}$ to $\cat{CDGA}$.
\end{definition}

\begin{remark}
The above notion of equivariant formality implies the multiplicative collapse of  the Eilenberg-Moore spectral sequence
\[\mathrm{Tor}^{i,j}_{H^*(BG)}(H^*_G(X),\QQ)\Rightarrow H^{i+j}(X).\]
\end{remark}

\begin{remark}
Formality of the Borel construction of a $G$-space $X$ does not necessarily imply formality of the action map $G\times X\to X$. For instance, the Borel construction of $S^3=SU(2)$ acting on $S^2=\mathbb{CP}^1$ is formal. On the other hand, the action map
\[\alpha:S^3\times S^2\to S^2\]
is not formal. Indeed, any formal map from the rationalization of $S^3\times S^2$ to the rationalization of $S^2$ must factor through the projection $S^3\times S^2\to S^2$. On the other hand, for any point $*\in S^2$, the composite
\[S^3=S^3\times *\to S^3\times S^2\xrightarrow{\alpha} S^2\]
coincides with the Hopf map and in particular is not rationally nullhomotopic. Another example is the $S^1$ action on $S^2$ by rotation, the Borel fibration is formal (c.f. section \ref{eq:2pts}) but the action map is not formal (otherwise the Borel fibration would be rationally trivial).
\end{remark}

\begin{remark}
The terminology \emph{equivariantly formal} is often used in the literature for something quite unrelated, namely the condition that the cohomology of the Borel construction $H^*(X_{hG})$ is free as a $H^*(BG)$-module. This is equivalent to asking that the Leray-Serre spectral sequence of the Borel fibration collapses at the $E_2$-page, and implies that the $E_2$-page of the Eilenberg-Moore spectral sequence is concentrated in the column $i = 0$. Neither notion of equivariant formality implies the other. We believe that our terminology is more compatible with the usage in rational homotopy theory. For us a $G$-space is equivariantly formal if its rational homotopy type is a formal consequence of the equivariant cohomology algebra $H^*_G(X)$ as an algebra over $H^*(BG)$. This notion is also called \textit{$G$-formality} in \cite{Lilly} and \textit{bundle formality} in \cite{Sculltorus},
where the various notions of equivariant formality are compared.
\end{remark}

\subsection{Leray-Serre spectral sequence for the Borel construction}

We next recall the construction of the Leray-Serre spectral sequence from the $\infty$-categorical point of view. Given a map of spaces $X\to Y$, there is an adjunction of $\infty$-categories
\[f^*:\mathrm{Fun}(Y,\D(\QQ))\leftrightarrows \mathrm{Fun}(X,\D(\QQ)):f_*\]
where the functor $f^*$ is precomposition with $f$ and $f_*$ is its right adjoint. If $p:X\to *$ is the unique map, then $p_*$ is a lax monoidal functor and $p_*(\QQ)$ is thus a commutative algebra object in the $\infty$-category $\D(\QQ)$ which is quasi-isomorphic to the singular cochains $E_\infty$-algebra $C^*(X,\QQ)$ and also to the commutative algebra $\Aa_{\pl}(X)$.

\begin{remark}
If $X$ is a reasonable space (paracompact, Hausdorff and  locally contractible) the category $\Fun(X,\D(\QQ))$ can be modelled by the full subcategory of the $\infty$-category of $\D(\QQ)$-valued sheaves on $X$ spanned by the locally constant sheaves. The functor $p_*$ coincides with what is usually called $\mathrm{R} p_*$ in the sheaf literature. In this case the multiplicative structure on $\mathrm{R}p_*\QQ$ can be strictified to a strict commutative dg-algebra using the Thom-Whitney construction \cite{Na}.
\end{remark}

For a general map $f:X\to Y$, we denote by $p:X\to *$ and $q:Y\to *$ the unique maps and we construct a filtration of $p_*(\QQ)=q_*(f_*(\QQ))$ by $q_*(f_*\QQ,\tau)$, namely, the image of the canonical filtration $\tau$ under $q_*$. The canonical filtration on $\cat{Fun}(Y,\cat{D}(\QQ))$ is obtained by applying the canonical filtration pointwise.

The resulting spectral sequence is given by
\[E_1^{-i,j}=H^{j-i}(X,H^i (f_*\QQ)[-i])=H^{j-2i}(Y,\mathcal{H}^i(F))\]
where $F$ denotes the fiber of $f$ and $\mathcal{H}^i(F)$ denotes the local system $H^i(F)$ equipped with its monodromy action by the fundamental groupoid of $Y$.
The above $E_1$-page is isomorphic to the page ${}^{\mathrm{LS}}E_2^{j-2i,i}$ of the Leray-Serre spectral sequence. In fact the two spectral sequences coincide up to renumbering (see Example 1.4.8 of \cite{DeHII} or \cite[Proposition 9.2]{antieauspectral})

Now we specialize to the case of a $G$-space $X$, where $G$ is a compact Lie group.
We denote by $\Aa_G(X)$ the filtered complex $q_*(f_*\QQ,\tau)$ producing the Leray-Serre spectral sequence for the map $f:X_{hG}\to BG=*_{hG}$, where $q:BG\to *$.
The associated spectral sequence of $\Aa_G(X)$ gives
\[E_1^{-i,j}(\Aa_G(X))\cong {}^{\mathrm{LS}}E_2^{j-2i,i}(X_{hG})=H^{j-2i}(BG,H^i(X))\Rightarrow H^{j-i}_G(X).\]

Recall from Section \ref{darkmagic} that $\FCx{0}{\QQ}$ is the filtered derived category, defined by localizing at $E_1$-isomorphisms. We have:

\begin{proposition}\label{prop functoriality}
The assignment
\[X\mapsto \Aa_G(X)\]
defines a contravariant symmetric monoidal $\infty$-functor from $G$-spaces to $\FCx{0}{\QQ}$. For every $r\geq 1$, the assignment
\[X\mapsto (E_r(\Aa_G(X)),d_r)\]
is a contravariant symmetric monoidal $\infty$-functor from $G$-spaces to the $1$-category $\cat{CDGA}$.
\end{proposition}

\begin{proof}
By the discussion above, this functor is identified with the composite of
\[\cat{Fun}(BG,\cat{S})\simeq\cat{S}_{/BG} \to\cat{Fun}(BG,\cat{D}(\QQ))^\op\to\cat{Fun}(BG,\FCx{0}{\QQ})^\op\xrightarrow{q_*}\FCx{0}{\QQ}^\op.\]
where the first functor is induced by postcomposition with the singular cochain functor $\cat{S}\to\cat{D}(\QQ)$ and the second functor is induced by postcomposition with the canonical filtration functor
\[\cat{D}(\QQ)\to\FCx{0}{\QQ}.\]
The first two functors of this sequence are symmetric monoidal and the last one is oplax symmetric monoidal. In particular, since any $G$-space is a commutative coalgebra, the object $\Aa_G(X)$ is naturally a commutative algebra in $\FCx{0}{\QQ}$. The second claim follows from the fact that $E_r$ is a lax symmetric monoidal functor to the $1$-category of cochain complexes (see \cite[Theorem 8.9]{antieauspectral}).
\end{proof}

\subsection{The orthogonal group}\label{subsection : orthogonal stuff}

We recall some very classical facts about the orthogonal group $O(n)$ \cite{Borel}. This is a compact Lie group. If $n$ is even, $n=2m$, then a maximal torus of $O(n)$ is given by the $2m\times 2m$ matrices that have $m$ blocks of size $2\times 2$ on the diagonal that are elements of $SO(2)$. If $n$ is odd, $n=2m+1$, a maximal torus of $O(n)$ is given  by the $(2m+1)\times (2m+1)$ matrices that have $m$ blocks of size $2\times 2$ on the diagonal that are elements of $SO(2)$ and where the remaining diagonal element is a $1$.

In the even case $n=2m$ the Weyl group is given by the semi-direct product $\Sigma_m\ltimes C_2^m$. This can be realized as a subgroup of $O(2m)\subset \mathrm{Aut}_{\RR}(\CC^m)$. The subgroup $\Sigma_m$ acts on $\CC^m$ by permuting the factors while the subgroup $C_2^m$ acts on $\CC^m$ by applying complex conjugation to some of the factors.

In the odd case $n=2m+1$, the Weyl group is given by $(\Sigma_m\ltimes C_2^m)\times C_2$. This can be realized as a subgroup of $O(2m+1)\subset \mathrm{Aut}_{\RR}(\CC^m\oplus\RR)$ in which $(\Sigma_m\ltimes C_2^m)$ acts on the $\CC^m$ factor as in the previous case and the extra copy of $C_2$ acts by the sign representation on $\RR$ and fixes $\CC^m$.

If instead of $O(n)$ we want to work with $SO(n)$, the maximal torus is the same as for $O(n)$ and the Weyl group is a subgroup of the Weyl group of $O(n)$ of index $2$. In the even case $n=2m$ it is the subgroup of $\Sigma_m\ltimes C_2^m$ consisting of elements $(\sigma,\epsilon_1,\ldots,\epsilon_m)$ such that
\[\mathrm{sign}(\sigma) \epsilon_1 \cdots \epsilon_m =1.\]
In the odd case $n=2m+1$, it is the subgroup $(\Sigma_m\ltimes C_2^m)\times C_2$ consisting of elements $((\sigma,\epsilon_1,\ldots,\epsilon_m),\eta)$ such that
\[\mathrm{sign}(\sigma) \epsilon_1 \cdots \epsilon_m \eta=1.\]

\subsection{Rational equivariant homotopy theory for the orthogonal group}

Let $G$ be a Lie group. Denote $\TT$ its maximal torus and $\Weyl$ its Weyl group.
Given a $G$-space $X$, we may form the Borel construction $X_{h\TT}$ for its $\TT$-action. This comes with a map to $B\TT$ and this map is $\Weyl$-equivariant. The following proposition (see \cite{atiyahbott} for example) shows that the equivariant rational homotopy type for actions of compact Lie groups can be reduced to the case of tori.

\begin{proposition}
Let $X$ be a $G$-space. The map
\[\Aa^*_{\pl}(X_{hG})\to\Aa^*_{\pl}(BG)\]
is naturally quasi-isomorphic to the map obtained by applying $\Weyl$ fixed points to
\[\Aa^*_{\pl}(X_{h\TT})\to\Aa^*_{\pl}(B\TT)\]
\end{proposition}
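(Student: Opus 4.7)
The plan is to reduce the statement to Borel's classical theorem $H^*(BG;\QQ) \cong H^*(B\TT;\QQ)^\Weyl$ and then extend to the $X$-relative setting via a base change argument. Throughout I use that $|\Weyl|$ is invertible in $\QQ$, so the functor of $\Weyl$-fixed points is exact on rational chain complexes and commutes with cohomology.

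First I would handle the absolute case $X = *$, showing that the natural map $\Aa_{\pl}(BG) \to \Aa_{\pl}(B\TT)^\Weyl$ is a quasi-isomorphism. By exactness of $\Weyl$-invariants this reduces to showing $H^*(BG;\QQ) \xrightarrow{\cong} H^*(B\TT;\QQ)^\Weyl$. For $G$ compact connected this is Borel's theorem; for disconnected $G$ (the case $G = O(n)$), one additionally uses $H^*(BG;\QQ) = H^*(BG^0;\QQ)^{\pi_0(G)}$ together with the fact that $\Weyl^0 = N_{G^0}(\TT)/\TT$ is normal in $\Weyl$ with quotient $\pi_0(G)$.

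Next I pass to general $X$ via the homotopy pullback square
\[
\begin{CD}
X_{h\TT} @>>> B\TT \\
@VVV @VVV \\
X_{hG} @>>> BG
\end{CD}
\]
which carries a $\Weyl$-action on the top row covering the trivial action on the bottom, arising from the conjugation action of $N_G(\TT)/\TT$. Applying $\Aa_{\pl}$, the key step is the base change quasi-isomorphism
\[\Aa_{\pl}(X_{h\TT}) \simeq \Aa_{\pl}(X_{hG}) \otimes^{\Ll}_{\Aa_{\pl}(BG)} \Aa_{\pl}(B\TT),\]
after which taking $\Weyl$-invariants gives
\[\Aa_{\pl}(X_{h\TT})^\Weyl \simeq \Aa_{\pl}(X_{hG}) \otimes^{\Ll}_{\Aa_{\pl}(BG)} \Aa_{\pl}(B\TT)^\Weyl \simeq \Aa_{\pl}(X_{hG}),\]
by the first step, and the equivalence is compatible with the structure maps to $\Aa_{\pl}(B\TT)^\Weyl \simeq \Aa_{\pl}(BG)$.

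The main obstacle will be justifying the base change equivalence. The cleanest route is to observe that $\Aa_{\pl}(B\TT)$ is a perfect module over $\Aa_{\pl}(BG)$: by Borel, $H^*(B\TT;\QQ)$ is a free graded $H^*(BG;\QQ)$-module of rank $|\Weyl^0|$, so the Eilenberg--Moore spectral sequence for the pullback above degenerates and converges. Once this is in place, the remaining manipulations with $\Weyl$-invariants are formal, using exactness of finite-group invariants over $\QQ$ and the fact that $\Weyl$ acts trivially on the base $\Aa_{\pl}(BG)$.
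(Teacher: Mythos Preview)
The paper does not supply its own proof of this proposition; it simply cites Atiyah--Bott and moves on. So there is nothing in the paper to compare your argument to directly, and your outline is a reasonable reconstruction of a standard proof.

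That said, your base-change route is more elaborate than what is usually done. The standard argument (and essentially the one in Atiyah--Bott) is more direct: the map $X_{h\TT}\to X_{hG}$ is a fiber bundle with fiber $G/\TT$, it is $\Weyl$-equivariant over the base, and the induced map $\Aa_{\pl}(X_{hG})\to\Aa_{\pl}(X_{h\TT})$ lands in the $\Weyl$-invariants. Since $(-)^{\Weyl}$ is exact over $\QQ$, it then suffices to check that $H^*(X_{hG})\to H^*(X_{h\TT})^{\Weyl}$ is an isomorphism, which follows from a transfer argument using $\chi(G/\TT)=|\Weyl|$. This avoids Eilenberg--Moore entirely. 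Your approach does buy something conceptually (it exhibits $\Aa_{\pl}(X_{h\TT})$ as a base change, which is a pleasant structural statement), but it is not needed for the proposition as stated.

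One genuine wrinkle in your write-up: for disconnected $G$ (e.g.\ $G=O(n)$), the base $BG$ is not simply connected and $\pi_0(G)$ acts nontrivially on $H^*(G/\TT)$, so the Eilenberg--Moore convergence you invoke is not automatic. You handle the disconnected reduction for $X=\ast$ but then silently run the base-change argument over $BG$ for general $X$. The easy fix is to carry out the reduction to $G^0$ \emph{first} for arbitrary $X$ (using $\Aa_{\pl}(X_{hG})\simeq\Aa_{\pl}(X_{hG^0})^{\pi_0(G)}$, which is the same averaging argument), and only then apply your Eilenberg--Moore step over the simply connected base $BG^0$; at that point your freeness-of-$H^*(B\TT)$-over-$H^*(BG^0)$ argument goes through cleanly.
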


\begin{remark}
Since $\Weyl$ is a finite group and we are working rationally, the fixed point functor is exact and therefore does not need to be derived.
\end{remark}

From the previous subsection and the above proposition we deduce that there is a natural quasi-isomorphism
\[\Aa^*_{\pl}(X_{hO(n)})\simeq (\Aa^*_{\pl}(X_{hSO(n)}))^{C_2}\]
where $C_2$ is the quotient of the Weyl group of $O(n)$ by the Weyl group of $SO(n)$.

As an application of this proposition let us recall the computation of the cohomology of $BO(n)$ and $BSO(n)$.

\begin{proposition}
Let $n\geq 2$ be an integer. Then
\begin{itemize}
\item If $n$ is odd
\[H^*(BSO(n))\cong H^*(BO(n))\cong\QQ[p_1,p_2,\ldots,p_{\lfloor n/2\rfloor}]\]
with $p_i$ of degree $4i$.
\item If $n$ is even $H^*(BO(n))\cong\QQ[p_1,\ldots,p_{n/2}]$ with $p_i$ of degree $4i$.
\item If $n$ is even $H^*(BSO(n))\cong\QQ[p_1,\ldots,p_{n/2-1},e]$ with $p_i$ of degree $4i$ and $e$ of degree $n$.
\end{itemize}
Moreover, for $n$ even the map $H^*(BO(n))\to H^*(BSO(n))$ sends $p_{n/2}$ to $e^2$ (and the other generators to the generators of the same name).
\end{proposition}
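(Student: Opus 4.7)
The plan is to apply the previous proposition with $X$ equal to a point, equipped with the trivial $G$-action. In that case $X_{h\TT} = B\TT$ and $\Aa_{\pl}^*(X_{hG}) \simeq \Aa_{\pl}^*(B\TT)^{\Weyl}$, so that $H^*(BG) \cong H^*(B\TT)^{\Weyl}$. The maximal torus described in Section~\ref{subsection : orthogonal stuff} is a product of $m$ copies of $SO(2)$ in all four cases (where $n = 2m$ or $n = 2m+1$), so $H^*(B\TT) = \QQ[x_1,\dots,x_m]$ with each $x_i$ of degree $2$. The proof then reduces to a classical invariant-theory computation, using the explicit descriptions of $\Weyl$ given in Section~\ref{subsection : orthogonal stuff}.

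For $G = O(2m)$, the group $C_2^m$ acts by $x_i \mapsto \pm x_i$ independently, so $\QQ[x_1,\dots,x_m]^{C_2^m} = \QQ[x_1^2,\dots,x_m^2]$. Taking the further $\Sigma_m$-invariants by the permutation action yields $\QQ[p_1,\dots,p_m]$, where $p_i$ is the $i$-th elementary symmetric polynomial in $x_1^2,\dots,x_m^2$, of degree $4i$. For $G = SO(2m)$, the Weyl group consists of $(\sigma,\epsilon)$ with $\mathrm{sign}(\sigma)\epsilon_1\cdots\epsilon_m = 1$, and the monomial $e := x_1 \cdots x_m$ is sent by $(\sigma,\epsilon)$ to $\mathrm{sign}(\sigma)\epsilon_1\cdots\epsilon_m \cdot e = e$, so $e$ is invariant. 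One checks (by a standard argument: any invariant polynomial can be written uniquely as $f + e g$ with $f, g$ invariant under the bigger Weyl group of $O(2m)$) that this yields $\QQ[p_1,\dots,p_{m-1},e]$, with the relation $e^2 = p_m$ accounting for $p_m$ being redundant. This same identity $e^2 = x_1^2\cdots x_m^2 = p_m$ proves the last statement about the restriction map.

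For $G = O(2m+1)$ and $G = SO(2m+1)$, the extra factor $C_2$ in $\Weyl$ acts on an $\RR$-factor orthogonal to the $\CC^m$ where the torus lies; in particular it acts trivially on $H^*(B\TT) = \QQ[x_1,\dots,x_m]$. Hence the $O(2m+1)$-invariants agree with the $\Sigma_m \ltimes C_2^m$-invariants, giving $\QQ[p_1,\dots,p_m]$. For $SO(2m+1)$, the index-two subgroup defined by $\mathrm{sign}(\sigma)\epsilon_1\cdots\epsilon_m \eta = 1$ still surjects onto $\Sigma_m \ltimes C_2^m$ (since $\eta$ is free to absorb any sign), and $\eta$ acts trivially, so the invariants are the same: $\QQ[p_1,\dots,p_m]$.

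There is no serious obstacle; the only mildly delicate step is the $SO(2m)$ invariant-theory computation, where one must argue that $\QQ[x_1,\dots,x_m]^{\Weyl}$ is exactly $\QQ[p_1,\dots,p_{m-1},e]$ rather than something larger. The cleanest argument writes an arbitrary invariant uniquely as $f_0 + e\, f_1$ with $f_i$ invariant under the full Weyl group of $O(2m)$, by decomposing according to the sign character of the order-two quotient $\Weyl(O(2m))/\Weyl(SO(2m))$.
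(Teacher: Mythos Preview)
Your approach is exactly what the paper intends: it states the proposition as an application of the preceding result (Weyl invariants of $H^*(B\TT)$) without giving a proof, and you carry out precisely that invariant-theory computation with the correct final answers.

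One small slip to clean up: you assert that $(\sigma,\epsilon)$ sends $e = x_1\cdots x_m$ to $\mathrm{sign}(\sigma)\,\epsilon_1\cdots\epsilon_m\, e$, but a permutation of the variables leaves the product $x_1\cdots x_m$ unchanged, so no $\mathrm{sign}(\sigma)$ factor arises. The standard description of the Weyl group of $SO(2m)$ is the subgroup of $\Sigma_m\ltimes C_2^m$ with $\epsilon_1\cdots\epsilon_m=1$ (the type $D_m$ Weyl group), under which $e\mapsto \epsilon_1\cdots\epsilon_m\, e = e$ directly; the paper's condition involving $\mathrm{sign}(\sigma)$ appears to be a misprint. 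With this correction your argument goes through verbatim, including the decomposition of an arbitrary invariant as $f_0 + e f_1$ via the sign character of the quotient $\Weyl(O(2m))/\Weyl(SO(2m))$.
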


\section{Homotopy orbits of the little disks $\infty$-operad}

Let $\Dd_n$ denote the little $n$-dimensional disks operad. There is an action of $O(n)$ on $\Dd_n$ by rotating disks. That is, each space $\Dd_n(k)$ has an $O(n)$-action and the composition maps
\[
\Dd_n(k) \times \Dd_n(m_1) \times \dots \times \Dd_n(m_k) \to \Dd_n(m_1 + \dots + m_k)
\]
are $O(n)$-equivariant, where the source is given the diagonal action. In order to make sense of the \emph{homotopy orbits} of $\Dd_n$ with respect to $O(n)$ it is a good idea to treat the space of colours of the operad in the same way as the space of operations. One way to do this, which is also robust in homotopical terms, is to take the dendroidal nerve of $\Dd_n$. This is the functor from category $\cat{Tree}$ (denoted $\Omega$ in  \cite{moerdijkdendroidal}) to spaces which to a tree $T$ assigns the product, over the vertices $v$ of $T$ with $|v|$ incoming edges, of $\Dd_n(|v|)$. For the tree $\eta$, with no vertices, we assign the point. We will make no notational distinction between the operad and its dendroidal nerve.

We denote by $f\Dd_n$ the framed little disks operad, it is a topological operad given by
\[f\Dd_n(k)=\mathrm{Emb}(D^{\sqcup k},D)\]
where $D$ denotes a standard open $n$-disk and $\mathrm{Emb}$ denotes the space of smooth embeddings. In fact this operad comes in two flavors depending of whether we consider orientation preserving embeddings or arbitrary embeddings. We use the notation $f\Dd_n^+$ for the orientation preserving version and $f\Dd_n$ for the other one.

For $G=O(n)$ or $SO(n)$, we define the homotopy $G$-orbits of $\Dd_n$ to be the dendroidal space $(\Dd_n)_{hG}$ obtained by taking homotopy orbits objectwise. It is an $\infty$-operad, in the following sense.

\begin{proposition}For $G=O(n)$ or $SO(n)$,
the dendroidal space $(\Dd_n)_{hG}$ satisfies the Segal and completeness conditions.
\end{proposition}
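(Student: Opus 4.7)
The plan is to verify the Segal and completeness conditions separately; both should follow rather formally from the Borel equivalence $\Fun(BG,\cat{S})\simeq\cat{S}/BG$ combined with the fact that $\Dd_n(1)$ is $G$-equivariantly contractible.

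For the Segal condition, I would exploit that $\Dd_n$ is a strict topological operad, so its dendroidal nerve satisfies $\Dd_n(T)=\prod_v \Dd_n(|v|)$ on the nose. The Borel construction, being an equivalence of $\infty$-categories, preserves all limits, and in particular takes finite products of $G$-spaces with diagonal action to iterated fiber products over $BG$ in $\cat{S}/BG$. This yields
\[(\Dd_n(T))_{hG}\;\simeq\;(\Dd_n(|v_1|))_{hG}\times_{BG}\cdots\times_{BG}(\Dd_n(|v_k|))_{hG}\]
for any tree $T$ with vertices $v_1,\ldots,v_k$, which is precisely the Segal condition for the dendroidal space $(\Dd_n)_{hG}$, with space of colors $(\Dd_n(\eta))_{hG}=BG$.

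For the completeness condition, I would restrict the dendroidal space along the standard embedding $\Delta\hookrightarrow\mathsf{Tree}$ sending $[n]$ to the linear tree $L_n$ with $n$ vertices, and show that the resulting simplicial space is equivalent to the constant simplicial space on $BG$, which is the Rezk nerve of $BG$ viewed as an $\infty$-groupoid and is therefore complete. The key input is $G$-equivariant contractibility of $\Dd_n(1)$: the straight-line homotopy $(r,c)\mapsto(r,(1-t)c)$ onto the subspace of disks centered at the origin is $O(n)$-equivariant since $0$ is fixed by $O(n)$, and then the radius can be scaled to give a contraction onto the identity embedding. Combined with the Segal condition already established, this forces $(\Dd_n(L_n))_{hG}\simeq BG$ for every $n$, with all face and degeneracy maps being equivalences.

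I do not anticipate any conceptual obstacle. The work is essentially organizational: one must reconcile the two natural descriptions of the dendroidal homotopy orbits (aritywise Borel construction of $N_d(\Dd_n)$ versus the dendroidal nerve of $(\Dd_n)_{hG}$ as an operad in $\cat{S}/BG$) and confirm that completeness of a dendroidal Segal space in the relevant sense reduces to completeness of its underlying simplicial space. Once these identifications are in place, the two ingredients above conclude the proof.
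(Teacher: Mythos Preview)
Your proposal is correct and follows essentially the same approach as the paper: the Segal condition is verified by noting that the Borel construction sends products of $G$-spaces to fiber products over $BG$ (the paper phrases this as the Segal map being fibered over $BG$ with fiber the Segal isomorphism for $\Dd_n$), and completeness reduces to the restriction to linear trees being the constant simplicial space $BG$. Your explicit verification of the $G$-equivariant contractibility of $\Dd_n(1)$ is a welcome elaboration of a point the paper leaves implicit.
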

\begin{proof}
The proof will work for any $1$-reduced dendroidal Segal space $X$ acted on by a group $G$ so we prove it in this generality (for us $1$-reduced means that the value on $\eta$ and one the $1$-corolla is contactible).
Let $T$ be a tree given as the union of two trees $R$ and $S$ intersecting along an edge $e$. To verify the Segal condition, we must check that the map
\[
X_{hG}(T) \to X_{hG}(R) \times^h_{X_{hG}(e)} X_{hG}(S)
\]
is a weak equivalence. But this is clear since this map is fibered over $BG$ and the fiber is the isomorphism $X(T) \to X(R) \times X(S)$.
Completeness asserts that the restriction of $X_{hG}$ to linear trees is a complete Segal space in the sense of Rezk, i.e., that the inclusion of the space of objects in the space of homotopy invertible morphisms is an equivalence \cite[Section 6]{rezkmodel}. This is also immediate since that restriction is the homotopically constant simplicial space $BG$.
\end{proof}

The following proposition shows that the homotopy orbits of $\mathcal{D}_n$ is the $\infty$-operad associated to the framed little $n$-disks operad. This is not new and, for a different model for $\infty$-operads, follows from results of \cite{lurie} as explained in \cite[Proposition 2.2]{horel2022two}.

\begin{proposition}\label{prop : orbit is framed}
The Rezk completion of $f\Dd_n$ is weakly equivalent to the $O(n)$ homotopy orbits of $\Dd_n$. Likewise, the Rezk completion of $f\Dd_n^+$ is weakly equivalent to the $SO(n)$ homotopy orbits of $\Dd_n$.
\end{proposition}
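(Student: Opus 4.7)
The plan is to identify $f\Dd_n$ with the semidirect product operad $\Dd_n \rtimes G$ and exhibit a Dwyer-Kan equivalence $f\Dd_n \to (\Dd_n)_{hG}$ of dendroidal Segal spaces; since $(\Dd_n)_{hG}$ is already complete by the preceding proposition, this identifies it with the Rezk completion of $f\Dd_n$.

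First I would observe that a smooth embedding $D^{\sqcup k} \hookrightarrow D$ decomposes uniquely into a $k$-tuple of translations, rescalings, and rotations. This gives an isomorphism of topological operads $f\Dd_n \cong \Dd_n \rtimes O(n)$, with the semidirect product taken with respect to the natural rotation action of $O(n)$ on $\Dd_n$, and analogously $f\Dd_n^+ \cong \Dd_n \rtimes SO(n)$. Next I would construct the comparison map $\Phi\colon f\Dd_n \to (\Dd_n)_{hG}$ of dendroidal Segal spaces. Using the bar model $EG = |NG_\bullet|$, the inclusion of $k$-simplices $G^k \hookrightarrow EG$ is $G$-equivariant, and composing with the projection $\Dd_n(k) \times EG \to \Dd_n(k)_{hG}$ yields a map $\Dd_n(k) \times G^k \to \Dd_n(k)_{hG}$ at each arity. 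Compatibility with operadic composition follows from the semidirect product description together with the functoriality of the bar construction in $G$-equivariant maps, so these maps assemble into a map of dendroidal spaces.

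Then I would check that $\Phi$ is a Dwyer-Kan equivalence. Essential surjectivity on colours is immediate: the induced map $* \to BG$ is $\pi_0$-surjective since $BG$ is connected. For fullness-and-faithfulness at the $k$-corolla, the source mapping space at the unique colour is $\Dd_n(k) \times G^k$, while the target mapping space at a tuple $(x, x, \ldots, x) \in BG^{k+1}$ is the homotopy fiber of the $(k+1)$-fold diagonal map $\Dd_n(k)_{hG} \to BG^{k+1}$. Factoring this through the Borel structure map $\sigma\colon \Dd_n(k)_{hG} \to BG$, one computes the homotopy fiber as $\Dd_n(k) \times G^k$, using that the homotopy fiber of the diagonal $BG \to BG^{k+1}$ is equivalent to $(\Omega BG)^k = G^k$ sitting over the point $x \in BG$, and that $\sigma$ has fiber $\Dd_n(k)$. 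A direct check confirms that the map induced by $\Phi$ is this identification.

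The main obstacle I anticipate is verifying that $\Phi$ respects the full dendroidal structure on all trees, not just corollas, so that the mapping-space comparisons above really do assemble into a Dwyer-Kan equivalence of dendroidal Segal spaces rather than just a levelwise observation. Much of this bookkeeping can be sidestepped by invoking the abstract treatment of $\infty$-operadic homotopy orbits from \cite[Proposition 2.2]{horel2022two}, where the equivalence is established in a weakly-equivalent model for $\infty$-operads.
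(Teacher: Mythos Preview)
Your overall strategy---exhibit a Dwyer--Kan equivalence from $f\Dd_n$ to a complete Segal dendroidal space---is exactly the paper's strategy, and your mapping-space computation is correct. The genuine gap is in the construction of the comparison map $\Phi$. Your description via ``the inclusion of $k$-simplices $G^k \hookrightarrow EG$'' is not well-posed: the realization of $NG_\bullet$ is $BG$, not $EG$, and in the two-sided bar model the $k$-simplices of $EG$ are $G^{k+1}$, not $G^k$; in any case the resulting map $G^k\to EG$ depends on a choice of point in $\Delta^k$ and is not obviously natural in $\mathsf{Tree}$. More seriously, naive candidates fail the inner-face (composition) compatibility. For instance, sending $(x;g_1,\ldots,g_k)\in \Dd_n(k)\times G^k$ to $[x,\ast]\in \Dd_n(k)_{hG}$ does not commute with $\circ_i$: the semidirect product composition produces $x\circ_i(g_i\cdot y)$ on the source side, while on the target side one gets $x\circ_i y$. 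A minor related point: with the paper's definition $f\Dd_n(k)=\mathrm{Emb}(D^{\sqcup k},D)$ (all smooth embeddings), your ``unique decomposition'' is false; one only has a weak equivalence $f\Dd_n\simeq \Dd_n\rtimes G$, not an isomorphism.

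The paper sidesteps exactly this difficulty by not constructing a direct map. It introduces an intermediary $Z(T):=f\Dd_n(T)_{hO(T)}$ where $O(T)=\map(e(T),G)$ acts by pre- and post-composition on every edge. Both comparison maps are then tautological: $f\Dd_n(T)\to Z(T)$ is the quotient map, and the composite $\Dd_n\to f\Dd_n\to Z$ factors through $(\Dd_n)_{hG}$ and gives a degreewise equivalence $(\Dd_n)_{hG}\to Z$. The Segal and completeness checks for $Z$, and the fully faithful check for $f\Dd_n\to Z$, are then the same homotopy-fiber computations you outline. So your fallback (citing \cite[Proposition 2.2]{horel2022two}, which the paper also cites) is legitimate, but if you want a self-contained argument you should replace your direct $\Phi$ by the paper's zigzag through $Z$.
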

\begin{proof}
We write the proof in the $O(n)$ case. The other case is completely similar. In preparation for the proof, we will need to introduce some notation. Given a tree $T$, we write $e(T)$ for its set of edges and $\lambda(T) \subset e(T)$ for its subset of leaves (the root does not qualify as a leaf). The group $\map(e(T), O(n))$ will be denoted $O(T)$.

We will write $X$ as short for $f\Dd_n$. For a corolla $C$, the space $X(C)$ has a natural action of $O(C)$ by pre and post-composition.  Namely, given a smooth embedding $f : \lambda(C) \times \RR^d \to \RR^d$ and an element $g:=(g_e) \in O(C)$, the element $g \cdot f$ is the embedding $\lambda(C) \times \RR^d \to \RR^d$ whose restriction to the component corresponding to $i \in \lambda(C)$ is $g_{e_0} f(i, -) g_{i}^{-1}$, where $e_0$ denotes the root. Correspondingly, for a general tree $T$ the space $X(T) = \prod_{v} X(C_v)$ has an action of $O(T)$. (Note $e(T)$ is the union of $e(C_v)$ where $v$ runs over the vertices of $T$.) These actions are compatible with respect to morphisms in $\cat{Tree}$. Given a morphism $R \to T$ in $\cat{Tree}$, we have a map $e(R) \to e(T)$ and a corresponding map $O(T) \to O(R)$. This endows $X(R)$ with an action of $O(T)$. Then, the induced map
\[
X(T) \to X(R)
\]
is $O(T)$-equivariant. So we let $Z$ be the dendroidal space whose value at a tree $T$ is the space of homotopy orbits
\[
Z(T) := X(T)_{h O(T)} \; .
\]
\emph{Claim}: $Z$ is a complete Segal dendroidal space and the map $X \to Z$ is a Rezk completion.

Verifying the Segal condition is similar to the proposition above. Let $T$ be a tree decomposed as a union of two trees $R$ and $S$, intersecting along an edge $u$. We need to show that the canonical map
\begin{equation}\label{eq:segal1}
X(T)_{hO(T)} \to X(R)_{hO(R)} \times^h_{X(u)_{hO(u)}} X(S)_{hO(S)}
\end{equation}
is a weak equivalence. This map fibers over
\begin{equation}\label{eq:segal2}
BO(T) \to BO(R) \times^h_{BO(u)} BO(S)
\end{equation}
where $BO(T) \cong \map(e(T), BO(n))$ and similarly for the other terms. In particular, the map \eqref{eq:segal2} is a weak equivalence, as $e(T) = e(R) \cup_{e(u)} e(S)$. The fiber of the map \eqref{eq:segal1} over the map \eqref{eq:segal2}
\[
X(T) \to X(R) \times^h_{X(e)} X(S)
\]
which is a weak equivalence since $X$ is Segal. Therefore \eqref{eq:segal1} is a weak equivalence, and so $Z$ is Segal. It is also complete, since the restriction of $Z$ to linear trees is weakly equivalent to the constant simplicial space $BO(n)$. The canonical map $X \to Z$ is, when restricted to linear trees, the completion map from the nerve of $O(n)$ to the constant simplicial space $BO(n)$. In particular, it is trivially essentially surjective. So to show that $X \to Z$ is a Rezk completion, it remains to show that it is fully faithful. That means showing, for every corolla $C$ with $n$ edges, that the square
\[
\xymatrix@M=5pt@R=18pt{
X(C) \ar[r] \ar[d] & \ar[d] Z(C) \\
\prod_{e \in C} X(e) \ar[r] & \prod_{e \in C} Z(e)
}
\]
is homotopy cartesian. But this is tautologically true, since $X$ evaluated on a tree with no vertices is a point, and the remaining terms form the homotopy fiber sequence $X(C) \to X(C)_{hO(C)} \to BO(C)$.

To conclude the proof, observe that the map $\Dd_n \to f\Dd_n \to Z$ factors through $(\Dd_n)_{hO(n)}$ and the resulting factorization $(\Dd_n)_{hO(n)} \to Z$ is a degreewise weak equivalence,  completing the proof.
\end{proof}

\begin{remark}\label{rem : non contradiction}
The model category of complete dendroidal Segal spaces presents the $\infty$-category of $\infty$-operads (with no restrictions on colors). Taking a fibrant replacement in this model structure can change a dendroidal space quite significantly. For example, take $G$ a topological group and view it as a topological operad concentrated in arity $1$. Then $N_d(G)$ is the dendroidal space that sends the linear tree $[n]$ to $G^n$ and any other tree to $\varnothing$. This is not complete unless $G$ is contractible, and a fibrant replacement is the dendroidal space that sends any linear tree to $BG$ (and non-linear trees to $\varnothing$). In particular, formality of these two diagrams are very different conditions. In the first case, it means formality of the Hopf algebra $\Aa_{\pl}(G)$ whereas in the second case it means formality of $\Aa_{\pl}(BG)$.

It should be noted that being one-colored is not a property of an $\infty$-operad, it is additional structure. This can be made precise by observing that the nerve of a one-colored simplicial operad $\mathcal{P}$ is usually not fibrant in the model category of complete dendroidal Segal spaces, i.e., it does not represent an $\infty$-operad. The Segal condition will always hold but the completeness can fail (it holds if and only if the space of homotopy units of the monoid $\mathcal{P}(1)$ is contractible). On the other hand, the nerve of $\mathcal{P}$ does represent a \emph{one-colored} $\infty$-operad, e.g., it is fibrant in a model category of dendroidal spaces equivalent to that of one-colored operads in spaces (the construction of such a model category can be found in \cite{bergnergroup}). This explains the apparent discrepancy between our main result and the results of \cite{Moriya} and \cite{KW}. In short, the framed little disks operad is not formal  in odd dimensions as a one-colored operad or one-colored $\infty$-operad but, as we show, it is always formal as an $\infty$-operad.
\end{remark}

\section{Little disks and some special automorphisms}

\subsection{A special automorphism of $\Dd_2$}

Recall that the group $\textup{GT}_{\QQ}$ consists of pairs $(\lambda,f)$ where $\lambda\in \QQ^\times$ and $f$ is an element of the Malcev completion of $F_2$. These pairs are required to satisfy some equations (see \cite[Equation 4.3,4.4 and 4.10]{drinfeldGT}). In particular, the pair $\tau=(-1,1)$ is an element of $\textup{GT}_{\QQ}$. The map
\[(\lambda,f)\mapsto \lambda\]
defines a group homomorphism $\chi:\textup{GT}_{\QQ}\to\QQ^\times$ called the \emph{cyclotomic character}.

\begin{proposition}\label{prop : dimension2}
Let $\psi \in \QQ^{\times}$. There exists an element $\varphi \in \textup{GT}_{\QQ}$ whose cyclotomic character is $\psi$ and which commutes with $\tau$.
\end{proposition}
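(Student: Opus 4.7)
The plan is to lift $\psi$ to an arbitrary element $\varphi_0 \in \GT_\QQ$ with cyclotomic character $\psi$, and then to correct $\varphi_0$ within its fibre over $\QQ^\times$ so that the result commutes with $\tau$. The correction will be possible for formal reasons coming from the pro-unipotent nature of the kernel $\GT_1 := \ker(\chi)$, where $\chi$ denotes the cyclotomic character.

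The first step is to produce a lift at all: I would cite Drinfeld's theorem on the existence of rational associators, which implies that $\chi : \GT_\QQ \to \QQ^\times$ is surjective, and pick any $\varphi_0 \in \chi^{-1}(\psi)$. Conjugation by $\tau$ defines an involution $\sigma := \mathrm{Ad}(\tau)$ of $\GT_\QQ$, and since $\QQ^\times$ is abelian $\sigma$ preserves $\GT_1$. Consider the element
\[
v := \varphi_0^{-1}\,\sigma(\varphi_0) \in \GT_1 .
\]
A two-line computation using $\sigma^2 = \mathrm{id}$ shows $v\,\sigma(v) = 1$, so $v$ is a non-abelian $1$-cocycle for the $\ZZ/2$-action on $\GT_1$. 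The key reformulation is this: if one can write $v = u\,\sigma(u)^{-1}$ for some $u \in \GT_1$, then $\varphi := \varphi_0 u$ satisfies both $\chi(\varphi) = \psi$ and $\sigma(\varphi) = \varphi$, i.e.\ $\varphi$ commutes with $\tau$. So the entire problem reduces to showing the vanishing of the non-abelian cohomology set $H^1(\ZZ/2, \GT_1)$.

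This vanishing is the main obstacle and the only non-formal step. However, it is a standard fact whenever one has a pro-unipotent group over a field in which the order of the acting finite group is invertible. I would argue it at the level of the pro-nilpotent Lie algebra $\mathfrak{grt}_1 := \mathrm{Lie}(\GT_1)$: the involution $\sigma$ splits each associated graded piece of the lower central filtration of $\mathfrak{grt}_1$ into its $\pm 1$-eigenspaces, and the projector $(1-\sigma)/2$, whose existence requires $2$ to be invertible in $\QQ$, produces a primitive of the cocycle in each graded piece. An induction along the filtration, using the Baker--Campbell--Hausdorff formula to transport between the Lie algebra and the group, assembles these into a genuine $u \in \GT_1$ that trivializes $v$ and completes the proof.
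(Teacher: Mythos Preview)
Your argument is correct and takes a genuinely different route from the paper. The paper works on the $\GRT$ side: it uses the explicit splitting $\GRT_\QQ \cong \GRT^1_\QQ \rtimes \QQ^\times$ to write down the obvious element $(1,\psi)$ with cyclotomic character $\psi$, observes that this element commutes with the involution on $PaCD_\QQ$, and then transports it to $\GT_\QQ$ via an \emph{even} associator (one that intertwines the involutions on $PaB_\QQ$ and $PaCD_\QQ$); the existence of rational even associators is a specific theorem of Drinfel'd. Your approach instead lifts $\psi$ arbitrarily and then corrects inside the fibre using the vanishing of $H^1(\ZZ/2,\GT_1)$, which you deduce from the pro-unipotence of $\GT_1$ over a field where $2$ is invertible. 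Both proofs ultimately lean on Drinfel'd (you still need an associator to know $\chi$ is surjective and that $\GT_1$ is pro-unipotent), but yours avoids the more delicate even-associator statement and replaces it with a general cohomological principle that would apply verbatim to any finite-order element in place of $\tau$. The paper's route, on the other hand, is more constructive: it produces an explicit $\varphi$ as the conjugate of $(1,\psi)$ by a chosen even associator, which fits naturally with the operadic framework used downstream in the paper.
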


\begin{proof}
To prove this proposition, we recall some facts about the theory of associators from the operadic point of view (see \cite{Fresse}, \cite{BaNa}, see also \cite{calaqueroca} for a survey). There is an operad $PaB$ in the category of groupoids whose classifying space is a model for the little $2$-disks operad. Applying Malcev completion aritywise yields an operad $PaB_{\QQ}$ in the category of groupoids enriched in commutative coalgebras. There is another operad $PaCD_{\QQ}$ in the category of groupoids enriched in commutative coalgebras which is the associated graded of $PaB_{\QQ}$ for the filtration by powers of the augmentation ideal. We can extend the scalars to any commutative $\QQ$-algebra $R$ and produce $PaB_R$ and $PaCD_R$. From this perspective  an associator over $R$ is an isomorphism of operads
\[PaB_R\to PaCD_R\]
which is the identity on the operad of objects (recall that these are operads in groupoids enriched in coalgebras). The arity $2$-component of an associator is determined by a map of Hopf algebras
\[
R[t,t^{-1}]^{\wedge}_I\to R[[t]]
\]
where the source denotes completion with respect to the augmentation ideal.
Such a map is necessarily of the form $t\mapsto \exp(\lambda t)$ for some $\lambda$ in $R$. A $1$-associator is an associator for which $\lambda=1$. This can be very explicitly described as a power series $\Phi\in R\langle\langle X,Y\rangle\rangle$ which is group-like and satisfies a pentagon and hexagon identity. This description, in turn, comes from a presentation of the operad $PaB$ (see \cite[Theorem 2.30]{calaqueroca}).

Similarly, we may introduce the group $\textup{GT}_R$ and $\textup{GRT}_R$ of automorphisms of $PaB_R$ and $PaCD_R$, respectively, inducing the identity on objects. These two groups are functorial in $R$ and can be viewed as affine group schemes. Evaluation in arity $2$ induces morphisms
\[\textup{GT}_R\to R^\times\;\textrm{and}\;\textup{GRT}_R\to R^\times\]
that we call the cyclotomic characters. Conjugation with an associator over $R$ induces an isomorphism of groups $\textup{GT}_R\cong \textup{GRT}_R$ which further commutes with the two cyclotomic characters.

Finally the operads $PaB_R$ and $PaCD_R$ both admit an involution corresponding topologically to the complex conjugation action on the little $2$-disks operad. An even associator over $R$ is defined to be an isomorphism $PaB_R\to PaCD_R$ which is a $1$-associator and further commutes with the involutions. This can be explicitly described as a power series $\Phi$ as above which further satisfies the equation
\begin{equation}\label{involution}
\Phi(-X,-Y)=\Phi(X,Y)
\end{equation}
This fact can be seen as a consequence of the presentation of $PaB$. An associator is determined by where it sends the arity three generator $\mu$ of $PaB$ (denoted $\Phi$ in \cite[Theorem 2.30]{calaqueroca}). This generator $\mu$ is invariant under the involution, so an associator is even if and only if it sends $\mu$ to an element of $PaCD_R$ which is invariant under the involution. If we translate this condition into the power series $\Phi$ as in \cite[Corollary 2.32]{calaqueroca}, we see that the condition is exactly equation (\ref{involution}).

Finally, the main point is that Drinfel'd has shown \cite[Proposition 5.4]{drinfeldGT} that rational even associators exist. Moreover, the group $\textup{GRT}_{\QQ}$ splits as a semi-direct product $\textup{GRT}_{\QQ}\cong \textup{GRT}^1_{\QQ}\rtimes\QQ^\times$. It follows that we can obtain the desired element of $\textup{GT}_{\QQ}$ by conjugating $(1,\psi)\in \textup{GRT}_{\QQ}$ with any rational even associator.
\end{proof}

\subsection{A special automorphism of $\Dd_n$}

For $X$ a space, we denote by $X_{\QQ}$ the rationalization of $X$ in the sense of Bousfield. The functor $X\mapsto X_{\QQ}$ is symmetric monoidal in the $\infty$-categorical sense. In particular, if $\mathcal{P}$ is a topological operad that we think of as a dendroidal space satisfying Segal condition, we may form $\mathcal{P}_{\QQ}$ by applying rationalization objectwise. Since the functor $\Aa_{\pl}$ sends rational equivalences to quasi-isomorphisms, we have an equivalence of functors from $\cat{Tree}$ to $\cat{CDGA}$
\[\Aa_{\pl}(\mathcal{P})\simeq \Aa_{\pl}(\mathcal{P}_{\QQ}).\]
This is true both in the strict sense and in the $\infty$-categorical sense.

\begin{proposition}\label{automorphism}
Let $\mu \in \QQ^{\times}$, let $n\geq 3$. There is an automorphism $\varphi$ of $(\Dd_{n})_{\QQ}$ such that
\begin{enumerate}
\item The induced action of $\varphi^*$ on $H^{i}(\Dd_n)$ is by multiplication by $\mu^{i/2}$ if $n$ is odd and by $\mu^{ni/2(n-1)}$ if $n$ is even.
\item The automorphism $\varphi$
can be extended to an automorphism of the $\infty$-operad $((\Dd_n)_{\QQ})_{hO(n)}$.
\item The induced action of $\varphi^*$ on $H^{i}(BO(n))$ is by multiplication by $\mu^{i/2}$.
\end{enumerate}
\end{proposition}

Before proving this proposition, let us make some comments.
Note that the cohomology of $\Dd_n$ is concentrated in degrees that are multiples of $n-1$, and the cohomology of $BO(n)$ is concentrated in even degrees, so all the exponents of $\mu$ that appear in the proposition are integers.

 Condition (2) implies in particular that $BO(n)_{\QQ}$ has an action of $\varphi$ since this space is the value of $((\Dd_n)_{\QQ})_{hO(n)}$ at linear trees. This is the action that we refer to in part (3). Moreover (2) is asserting that the action of $\varphi$ on $\Dd_{2n}$ is $O(n)$-equivariant in a ``twisted'' sense. Precisely, it does not commute with the $O(n)$ action but commutes with it up to a non-trivial automorphism of $O(n)$.

\begin{proof}
We denote by $X\mapsto X^{\wedge}$ the pro-unipotent completion of a space. This is related to the rationalization in nice cases (see \cite[Proposition 2.11]{boavidaformality}) but better behaved for our purposes in non-simply connected situations.

In order to construct automorphisms of $(\mathcal{D}_n)_{\QQ}$, we use the following sequence of equivalences from \cite{boavidaformality}:
\[\Aut((\Dd_n)_{\QQ})\to\Aut({(\Dd_n^\wedge)}_{\QQ})\to \Aut({({j^*\Dd_n^\wedge})}_{\QQ})\to \Aut({({j^*\Dd_2^\wedge})}_{\QQ}^{\boxtimes \lfloor n/2\rfloor}(\boxtimes {j^*\Dd_1})) \]
where in the last automorphism space the factor $j^*(\Dd_1)$ is present if $n$ is odd and absent if $n$ is even. The first of these maps is a weak equivalence by the fact that $\Dd_n$ is simply connected \cite[Proposition 2.11]{boavidaformality}. For the second map, $j^*$ denotes the restriction functor induced by a certain functor $j:\cat{simp(Fin)}\to\cat{Tree}$ from the category of simplices of the nerve of $\cat{Fin}$. It induces an equivalence by \cite[Proposition 6.3]{boavidaformality}. Finally the last map is an equivalence by the additivity theorem of \cite{boavidaweiss} which remains true after completion by \cite[Theorem 5.5]{boavidaformality}.

Now, the operad $PaB_{\QQ}$ in the previous proposition is a model for $\Dd_2^\wedge$, the pro-unipotent completion of the operad $\mathcal{D}_2$. Let us write $\varphi'$ for the automorphism of $\Dd_2^\wedge$ corresponding to $\mu$ in  Proposition \ref{prop : dimension2}. In the even case, consider the box product of $n/2$-copies of $\varphi'$ and in the odd case, consider the box product of $\lfloor n/2\rfloor$ copies of $\varphi'$ with one copy of the identity of $\Dd_1$. In either case, this produces an automorphism of $(\Dd_n)_\QQ$ thanks to the sequence of equivalences above. We call this automorphism $\varphi$ and check that it satisfies the three conditions. For condition (1), this follows from the fact that this automorphism acts with degree $\mu^{\lfloor n/2\rfloor}$ in arity $2$ \cite[Propositions 7.2 and 8.2]{boavidaformality}.

It remains to prove condition (2) and (3). We first prove that the analogous conditions are satisfied with $O(n)$ replaced by its maximal torus $\TT$ (see subsection \ref{subsection : orthogonal stuff} for details about the maximal torus and its Weyl group). In the odd or even dimensional cases, the decomposition that we have chosen is compatible with the action of $\TT$ so (2) follows easily. Part (3) can be reduced to the case $n=2$ by additivity. But this case follows from the fact that
\[\Aut^h((\Dd_2)_{\QQ})\simeq \GT_{\QQ}\ltimes SO(2)_{\QQ}\]
where the semi-direct product structure is defined through the cyclotomic character (see \cite{Fresse}). In particular our automorphism $\varphi$ acts with degree $\mu$ on $SO(2)_\QQ=S^1_{\QQ}$ which implies (3).

Now we prove (2) and (3) for the group $O(n)$. For this it suffices to use the equation
\[((\Dd_n)_{\QQ})_{hO(n)}\simeq (((\Dd_n)_{\QQ})_{h\TT})_{h\Weyl}.\]
Therefore, we just need to prove that the action of $\varphi$ on $((\Dd_n)_{\QQ})_{h\TT}$ is compatible with the $\Weyl$-action. This is clear for the symmetric group part of $\Weyl$ (see subsection \ref{subsection : orthogonal stuff} for an explicit description of $\Weyl$). For the $(C_2)^{\lfloor n/2\rfloor}$ subgroup, this comes from the fact that the automorphism $\varphi'$ of $(\Dd_2)_{\QQ}$ that we had chosen commutes with $\tau$.
\end{proof}

\section{Prelude}

Before proving the main theorem let us illustrate how the
abstract machinery of Section \ref{darkmagic} on purity and filtered formality may be applied to a couple of simple particular situations.

First, let us consider $\CC P^n$ with its $G:=PGL_{n+1}(\CC)$-action. Then we have
\[E_1^{-i,j}(\Aa_G(\CC P^n))=H^{j-2i}(BG)\otimes H^i(\CC P^n)\Rightarrow H^{j-i}_G(\CC P^n).\]
Since both $\CC P^n$ and $G$ are defined over $\ZZ$, by usual étale cohomology arguments, after extending scalars to $\QQ_\ell$ we can equip $\Aa_G(\CC P^n)$ with a Frobenius action $\varphi$ in such a way that
$E_1^{-i,j}(\Aa_G(\CC P^n))$ becomes pure of weight $j-i$. This gives that, for all $i$, $H_G^i(\CC P^n;\QQ_\ell)$ is pure of weight $i$. By Theorem \ref{purepurity} we obtain that $\CC P^n$ is $G$-equivariantly formal over $\QQ_\ell$. By descent of formality for maps of commutative dg-algebras the same is true over $\QQ$.

\begin{remark}
Note  that even for $n=1$, the Leray-Serre spectral sequence does not collapse multiplicatively. Indeed, in that case, the inclusion $U(1)\to PGL_2(\CC)$ given by
\[a\mapsto \begin{bmatrix}
a & 0\\
0& 1
\end{bmatrix}\]
is a homotopy equivalence and the computation is the same as in Example \ref{eq:2pts}.
\end{remark}

A similar argument gives the following general result.

\begin{theorem}
 Let $X$ be a smooth complex projective variety together with an action of an algebraic group $G$.
 Then $X$ is $G$-equivariantly formal.
\end{theorem}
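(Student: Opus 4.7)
The plan is to generalize the $\CC P^n$ argument from the preceding example, using Deligne's purity theorem (Weil II) in place of the explicit diagonal Frobenius action on $H^*(\CC P^n)$.

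First I would spread out the data: choose a finitely generated subring $R \subset \CC$ over which $X$, $G$ and the action all have models---a smooth projective scheme $\mathcal{X} \to \mathrm{Spec}\,R$ and a smooth affine group scheme $\mathcal{G} \to \mathrm{Spec}\,R$ acting on $\mathcal{X}$---recovering the original data after base change to $\CC$. Picking a closed point of $\mathrm{Spec}\,R$ with residue field $\FF_q$ yields models $X_0, G_0$ over $\FF_q$. The Borel construction $X_{hG}$ is modelled by the simplicial scheme arising from the action groupoid of $G_0$ on $X_0$, and its $\ell$-adic étale cohomology agrees with $H^*(X_{hG}; \QQ_\ell)$ by the Artin comparison theorem together with smooth and proper base change. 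This equips the filtered complex $\Aa_G(X) \otimes \QQ_\ell$---the filtered pushforward of the constant sheaf along $X_{hG} \to BG$ with its canonical filtration---with a Frobenius endomorphism $\varphi$.

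Next I would check purity of the $E_1$-page. Since $X_0$ is smooth and projective, Deligne's theorem yields that $H^i_{et}(X_0; \QQ_\ell)$ is pure of weight $i$. For $H^*_{et}(BG_0; \QQ_\ell)$, the Levi decomposition of $G_0$ reduces the computation to the reductive case, where the cohomology is polynomial in Chern classes and $H^{2k}(BG_0)$ is pure of weight $2k$. Consequently the $E_1$-term
\[E_1^{-i,j}(\Aa_G(X)) \;=\; H^{j-2i}(BG_0) \otimes H^i(X_0)\]
is pure of Frobenius weight $(j-2i) + i = j - i$, and by Remark \ref{LisbonGate} (applied with $\alpha = 1$) the filtered complex lies in $\FCxp{1}{\QQ_\ell[\varphi]}$. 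Theorem \ref{theoEis1} then produces a natural zig-zag of quasi-isomorphisms of filtered commutative dg-algebras between $\Aa_G(X) \otimes \QQ_\ell$ and $E_1(\Aa_G(X)) \otimes \QQ_\ell$, which is exactly $G$-equivariant formality over $\QQ_\ell$. Descent of formality for maps of finite-type commutative dg-algebras under $\QQ \subset \QQ_\ell$, as in the $\CC P^n$ case above, then yields equivariant formality over $\QQ$.

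The main obstacle is reconciling the Weil-theoretic notion of purity with the stricter definition of Section \ref{darkmagic}, which requires the eigenvalues of $\varphi$ on weight-$i$ pieces all to equal a single scalar $\xi^i$. For a general smooth projective variety this literal condition fails, but the coprimality argument at the heart of Proposition \ref{prop : Hom is zero} only uses that Frobenius eigenvalues on cohomology of different weights have different absolute values---which holds for $q$-Weil numbers by Deligne's theorem. One therefore needs to either extend the definition of \emph{pure of weight $i$} to allow arbitrary $q$-Weil numbers of weight $i$ as eigenvalues, or pass to a suitable semisimplification of $\varphi$; in either case, the machinery of Section \ref{darkmagic} applies verbatim.
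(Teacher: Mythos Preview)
Your approach is essentially the paper's: spread out, use Deligne's purity for $X$ and the computation of $H^*(BG)$ in terms of Chern classes to see that $E_1^{-i,j}(\Aa_G(X))$ is pure of weight $j-i$, then invoke the purity-implies-formality machinery over $\QQ_\ell$ and descend. Your last paragraph correctly identifies the one point that needs care---the eigenvalue condition in Section~\ref{darkmagic} is too restrictive---and the paper handles it exactly as you suggest, by enlarging the notion of ``pure of weight $i$'' to allow arbitrary $q$-Weil numbers of weight $i$, after which the proof of Proposition~\ref{prop : Hom is zero} goes through verbatim.

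There is one small misstep. The weight $j-i$ on $E_1^{-i,j}$ does \emph{not} match the hypothesis of Remark~\ref{LisbonGate} with $r=1$ and $\alpha=1$: that remark would require $E_1^{-i,j}$ to be pure of weight $\alpha((1-r)i+jr)=j$, not $j-i$, and in fact no choice of $(r,\alpha)$ makes the formula $\alpha((1-r)i+jr)$ equal $j-i$ identically. So your filtered complex does not literally land in $\FCxp{1}{\QQ_\ell[\varphi]}$, and Theorem~\ref{theoEis1} does not apply as stated. The correct route---and the one the paper takes, both in the $\CC P^n$ example and in the odd-dimensional case of Theorem~\ref{maintheo}---is to observe that $j-i$ is the total cohomological degree, so the abutment $H^k_G(X;\QQ_\ell)$ is pure of weight $k$, and then apply Theorem~\ref{purepurity} with $\alpha=1$ directly to the unfiltered complex $\Aa_{\pl}(X_{hG})\otimes\QQ_\ell$.
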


To prove this, one just needs to enlarge the notion of weight by considering complexes $A$ over $\QQ_\ell[\varphi]$ such that the eigenvalues of $\varphi^*$ on the cohomology $H^i(A)$ are Weil numbers of weight $i$. The splitting Theorem \ref{purepurity} works as well for this bigger category.

\begin{remark}
In \cite{Lilly}, it is shown that compact Kähler manifolds
with holomorphic actions are equivariantly formal under the assumption that the Leray-Serre spectral sequence degenerates at $E_2$. Note that our argument shows that this is always the case for smooth projective varieties.
Also, \cite{ScullKahler} proves formality for holomorphic torus actions on simply connected compact Kähler manifolds, in the genuine equivariant setting.
\end{remark}

As another illustrative example, we consider configuration spaces of points in $\CC^n$, together with their $GL_n(\CC)$-action.
Let us first recall their cohomology, which will also be useful in proving the main theorem.

\begin{theorem}[Arnold, Cohen]\label{cohomconf}
There is an isomorphism of rings
\[H^*(\Conf_k(\RR^n))\cong \QQ[x_{ij}]/(x_{ij}^2, x_{ij}-(-1)^{n}x_{ji},x_{ij}x_{jk}+x_{jk}x_{ki}+x_{ki}x_{ij})\]
where $1\leq i,j\leq k$ and $i \neq j$ and the classes $x_{ij}$ are of degree $n-1$.
\end{theorem}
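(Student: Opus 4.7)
The plan is to proceed by induction on $k$ using the Fadell--Neuwirth fibration, with the explicit cohomology classes $x_{ij}$ constructed via the ``Gauss maps'' to a sphere.

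First, I would construct the classes. For each ordered pair $i\neq j$, define the map
\[
\pi_{ij}:\Conf_k(\RR^n)\to S^{n-1},\quad (z_1,\ldots,z_k)\mapsto \frac{z_i-z_j}{|z_i-z_j|}.
\]
Fixing a generator $\omega\in H^{n-1}(S^{n-1})$, set $x_{ij}:=\pi_{ij}^*\omega$. The first relation $x_{ij}^2=0$ is immediate from $\omega^2=0$. The symmetry relation $x_{ji}=(-1)^n x_{ij}$ follows because $\pi_{ji}$ is the composite of $\pi_{ij}$ with the antipodal map of $S^{n-1}$, which acts on $H^{n-1}$ by $(-1)^n$. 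For the Arnold relation, it suffices by naturality to check it in $\Conf_3(\RR^n)$: the three maps $\pi_{12},\pi_{23},\pi_{31}$ assemble into a map $\Conf_3(\RR^n)\to (S^{n-1})^{\times 3}$, and a direct inspection of the image (or of the homotopy type $\Conf_3(\RR^n)\simeq S^{n-1}\times(S^{n-1}\vee S^{n-1})$) shows $x_{12}x_{23}+x_{23}x_{31}+x_{31}x_{12}=0$.

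Next, I would use the Fadell--Neuwirth fibration obtained by forgetting the last point,
\[
F_k:=\RR^n\setminus\{p_1,\ldots,p_{k-1}\}\lra \Conf_k(\RR^n)\lra \Conf_{k-1}(\RR^n),
\]
whose fiber $F_k$ is homotopy equivalent to $\bigvee_{i=1}^{k-1} S^{n-1}$. The cohomology $H^{n-1}(F_k)$ is free of rank $k-1$ with basis given by the restrictions of $x_{1k},\ldots,x_{k-1,k}$ (for the generic fiber, these are exactly the Gauss maps to $S^{n-1}$ around each puncture). The crucial point is that these fiber classes are the restrictions of \emph{global} classes on the total space, so by the Leray--Hirsch theorem the Serre spectral sequence degenerates at $E_2$ and
\[
H^*(\Conf_k(\RR^n))\cong H^*(\Conf_{k-1}(\RR^n))\otimes H^*(F_k)
\]
as a module over $H^*(\Conf_{k-1}(\RR^n))$. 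Induction then shows $H^*(\Conf_k(\RR^n))$ is additively generated, as a module over $H^*(\Conf_{k-1}(\RR^n))$, by the squarefree monomials in $x_{1k},\ldots,x_{k-1,k}$ of length $\leq 1$, giving the Poincaré polynomial $\prod_{j=1}^{k-1}(1+j\, t^{n-1})$.

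Finally, I would compare with the Arnold algebra $A_k$ on the right-hand side. The map $A_k\to H^*(\Conf_k(\RR^n))$ sending the symbol $x_{ij}$ to the class defined above is well-defined by the verification of the relations. To show it is an isomorphism, pick an ordering and use the Arnold relations to rewrite every monomial in terms of ``admissible'' monomials $x_{i_1 j_1}\cdots x_{i_r j_r}$ with $j_1<\cdots<j_r$ and $i_\ell<j_\ell$. A standard combinatorial argument (due to Arnold) shows these admissible monomials span $A_k$ and that $A_k$ has the same Poincaré polynomial $\prod_{j=1}^{k-1}(1+j\, t^{n-1})$. Together with surjectivity coming from the Leray--Hirsch splitting, this forces the map to be an isomorphism.

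The main obstacle is the degeneration of the Serre spectral sequence and the accompanying Leray--Hirsch argument: one must verify that the classes restricting to the cohomology generators of the fiber $F_k$ are genuinely restrictions of classes defined globally on $\Conf_k(\RR^n)$, which is precisely why it is essential to build the $x_{ij}$ through the global Gauss maps rather than by a fiberwise construction.
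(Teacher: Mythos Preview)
Your proposal is essentially the standard proof of this classical result, and the outline is correct. Note, however, that the paper does not give its own proof of this statement: it is merely recalled as a known theorem attributed to Arnold and Cohen, so there is nothing to compare your argument against. Your sketch via Gauss maps, the Fadell--Neuwirth fibration, Leray--Hirsch, and the dimension count on the Arnold algebra is the textbook route and would serve as a complete proof once the Arnold relation in $\Conf_3(\RR^n)$ is checked carefully (the ``direct inspection'' you allude to is the only point that deserves a line or two more of detail).
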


\begin{theorem}\label{theo : conf is formal}
The space $\Conf_k(\CC^n)$ is $GL_n(\CC)$-equivariantly formal over $\QQ$.
\end{theorem}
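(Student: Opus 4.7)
The plan is to apply the Frobenius purity machinery of Section~\ref{darkmagic} to the filtered CDGA $A := \Aa_{GL_n}(\Conf_k(\CC^n))$, in direct analogy with the argument given above for smooth projective varieties. Both $\Conf_k(\mathbb{A}^n)$ and $GL_n$ are defined over $\mathbb{Z}$, so reducing modulo a good prime and passing to $\ell$-adic étale cohomology equips $A$ with a Frobenius action $\varphi$ preserving the Leray filtration $\tau$. Formality over $\QQ$ will follow from formality over $\QQ_\ell$ by the usual descent, as in the $\CC P^n$ example.

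The key input is the weight computation on the $E_1$-page. Since $GL_n(\CC)$ is connected, monodromy on the fibre is trivial and $E_1^{-i,j}(A) \cong H^{j-2i}(BGL_n) \otimes H^i(\Conf_k(\CC^n))$. By Theorem~\ref{cohomconf}, each generator $x_{ab}$ is pulled back from the fundamental class of $H^{2n-1}(\CC^n\setminus\{0\}) \cong \QQ(-n)$; hence $H^i(\Conf_k(\CC^n))$ vanishes unless $i = m(2n-1)$, in which case it is pure of Frobenius weight $2nm$. Combined with the weight $j - 2i$ on $H^{j-2i}(BGL_n)$, the $E_1$-page is pure of weight $j - 2m(n-1)$, which equals $\tfrac{1}{2n-1}\bigl((1-r)i + rj\bigr)$ for $r = 2n-1$. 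Remark~\ref{LisbonGate} then places $A$ in $\FCxp{2n-1}{\QQ_\ell[\varphi]}$, and Theorem~\ref{theoEis1} provides an equivalence $A \simeq E_{2n-1}(A)$ in $\FCx{2n-1}{\QQ_\ell[\varphi]}$.

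To upgrade this to a quasi-isomorphism of underlying CDGAs, I would compare Frobenius weights on source and target of each Leray--Serre differential: the bigrading constraint forces the shift $r$ to satisfy $r \equiv 1 \pmod{2n-1}$, and a direct computation shows that for $r = (2n-1)k + 1$ the weights of source and target differ by $k - 1$, so every differential vanishes except possibly $d_{2n}$ (the case $k = 1$). The spectral sequence therefore degenerates at the $E_{2n+1}$-page (Leray--Serre indexing), which both ensures that the equivalence above passes to ordinary CDGAs and identifies $E_{2n-1}(A)$ with the explicit CDGA $\bigl(H^*(BGL_n) \otimes H^*(\Conf_k(\CC^n)),\, d_{2n}\bigr)$.

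What remains, and what I expect to be the main obstacle, is showing formality of this intermediate CDGA. A local computation on $\Conf_2(\CC^n)$ identifies the transgression $d_{2n}(x_{ab})$ with (a sign times) the top Chern class $c_n \in H^{2n}(BGL_n)$, and by naturality under point-forgetting maps the formula is uniform across arities. The result is a Koszul-type complex built from Arnold's algebra by killing the class $c_n$ through all generators $x_{ab}$; its formality reduces to the classical statement that the Koszul complex on a regular element is quasi-isomorphic to its quotient. Combining the equivalences then yields $A \simeq H^*_{GL_n}(\Conf_k(\CC^n))$ over $\QQ_\ell$, and the result over $\QQ$ follows by descent.
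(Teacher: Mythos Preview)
Your proposal follows the same strategy as the paper: equip $\Aa_G(X)$ with a Frobenius action via étale cohomology, verify the purity hypothesis so that Theorem~\ref{theoEis1} (with $\alpha=1/(2n-1)$, $r=2n-1$) reduces the problem to formality of the penultimate Leray--Serre page $({}^{\mathrm{LS}}E_{2n},d_{2n})$, then check that page is formal and descend from $\QQ_\ell$ to $\QQ$.

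Two remarks. First, your ``upgrade to underlying CDGAs'' step is unnecessary: an equivalence in $\FCx{2n-1}{\QQ_\ell[\varphi]}$ is by definition a zigzag of $E_{2n}$-isomorphisms of filtered CDGAs, and since the filtrations are bounded below and exhaustive these are automatically quasi-isomorphisms of the underlying CDGAs. The degeneration you argue for is a consequence of purity, not a prerequisite.

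Second, your Koszul reduction for the last step is not quite complete as stated. The CDGA $(H^*(BGL_n)\otimes H^*(\Conf_k(\CC^n)),\,d_{2n})$ is not literally a Koszul complex on one element: there are many generators $x_{ab}$, all transgressing to the same $c_n$, with Arnold relations among them. One can salvage your idea by observing that the derivation $\partial(x_{ab})=1$ on the Arnold algebra is acyclic (contracting homotopy $x_{12}\cdot-$), hence $H^*(\Conf_k(\CC^n))\cong\Lambda(x_{12})\otimes K$ with $K=\Ker\partial$, so the CDGA factors as (Koszul on $c_n$) $\otimes\, K$. The paper sidesteps this: it simply notes that $H^*(BGL_n)/(c_n)\cong\QQ[c_1,\ldots,c_{n-1}]$ embeds back into $H^*(BGL_n)$ as a subalgebra, yielding an explicit inclusion $({}^{\mathrm{LS}}E_{2n+1},0)\hookrightarrow({}^{\mathrm{LS}}E_{2n},d_{2n})$ of CDGAs which is visibly a quasi-isomorphism.
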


\begin{proof}
 To lighten notation, we write $X=\Conf_k(\CC^n)$ and $G=GL_n(\CC)$. The action is algebraic so, using \'etale cohomology arguments, after extending scalars to $\QQ_\ell$ we can equip both $X$ and $BG$ with a Frobenius action.
In this case, $H^{i}(BG;\QQ_\ell)$ is pure of weight $i$ and $H^j(X;\QQ_\ell)$ is pure of weight $2nj/(2n-1)$ (see \cite{CiHo2}).
This gives that, over $\QQ_\ell$, the group
\[E_1^{-i,j}(\Aa_G(X))=H^{j-2i}(BG;\QQ_\ell)\otimes H^i(X;\QQ_\ell)\]
is pure of weight
${{1}\over{2n-1}}\left(j(2n-1)+i(2-2n)\right)$.
Therefore $\Aa_G(X)\otimes\QQ_\ell$ lands in the category $\FCxp{2n-1}{\QQ_\ell[\varphi]}$, with
$\alpha={1\over 2n-1}$.
By Theorem \ref{theoEis1}, we obtain that a model for $\Aa_{\pl}(X_{hG})\otimes\QQ_\ell$ is given by the $E_{2n-1}$-page of $\Aa_G(X)\otimes\QQ_\ell$, which is also the $E_{2n}$-page of the Leray-Serre spectral sequence. The differential of this page is determined by $d_{2n}:H^{2n-1}(X)\to H^{2n}(BG)$ since the cohomology of $X$ is generated as a ring by $H^{2n-1}(X)$. We claim that, for all $i$ and $j$, the map $d_{2n}$ sends the generator $x_{ij}$ to $ c_n$ where $c_n\in H^{2n}(BG)$ is the top Chern class.
\begin{center}

\begin{tikzpicture}
\draw[thick] [->] (0.1,1.75)   -- (2.9,0.1);
\draw[thick]  (0,0)   -- (3.5,0);
\draw[thick]  (0,0)   -- (0,2.5);
\draw (0,1.75)  node[left] {$_{2n-1}$} node{$\bullet$};
\draw (0.5,0)  node[below] {$_2$} node{$\bullet$};
\draw (1,0)  node[below] {$_4$} node{$\bullet$};
\draw (2,0)  node[below] {$\cdots$};
\draw (3,0)  node[below] {$_{2n}$} node{$\bullet$};
\draw (1.8,0.8)  node[above] {$_{d_{2n}}$};
\draw (-1,1) node {${}^{\mathrm{LS}}E_{2n}=$};
\end{tikzpicture}
\end{center}
In order to see this we may use the map $\Conf_k(\CC^n)\to \Conf_2(\CC^n)$ that forgets all the points but $i$ and $j$. This map is $G$-equivariant and functoriality of the spectral sequence shows that it suffices to prove the claim in the case $k=2$. In that case, the Borel fibration for the $GL_n(\CC)$-action on $\CC^n-\{0\}$ is identified with $BGL_{n-1}(\CC) \to BGL_{n}(\CC)$ and it is classical that the transgression $d_{2n}$ maps the generator of $H^{2n-1}(\CC-\{0\})$ to the Euler class, which coincides with the top Chern class.

Now, to prove formality of $\Aa_{\pl}(X_{hG})\otimes\QQ_\ell$ it suffices to build a quasi-isomorphism of commutative dg-algebras between $({}^{\mathrm{LS}}E_{2n},d_{2n})$ and its cohomology.
The cohomology is given by
\[{}^{\mathrm{LS}}E_{2n+1}=(\Ker\,d_{2n}\cap H^*(X))\otimes H^*(BG)/(c_{n}).\]
The quotient $H^*(BG)/(c_{n})$ is isomorphic to a subalgebra of $H^*(BG)$
so we obtain an inclusion \[({}^{\mathrm{LS}}E_{2n+1},0)\hookrightarrow ({}^{\mathrm{LS}}E_{2n},d_{2n})\] which is obviously a quasi-isomorphism. This proves $G$-equivariant formality of $X$ over $\QQ_\ell$, which can then be descended to $\QQ$.
\end{proof}

\begin{remark}
The theorem above recovers the known fact that $\mathcal{M}_{0,k+1}$, the moduli space of genus 0 curves with $k+1$ marked points, is formal. Indeed, the Borel construction $\Conf_k(\CC)_{hGL_1(\CC)}$ is equivalent to the quotient $\Conf_k(\CC)/GL_1(\CC)$, since the action is free, and this quotient is equivalent to $\mathcal{M}_{0,k+1} = \Conf_{k+1}(\mathbb{P}^1)/PGL_2(\CC)$.
\end{remark}

\begin{remark}
The theorems above capture the essence of the proof of our main theorem. However, there are several reasons that made the strategy explained above insufficient for our purposes.
\begin{enumerate}
\item It only gives formality with $\QQ_\ell$-coefficients.
\item It is not clear that the Galois action will be compatible with operadic composition.
\item It only works in even dimensions.
\item In the even dimensional case, it gives $GL_n(\CC)\simeq U(n)$-equivariant formality instead of $O(2n)$-formality.
\end{enumerate}
We believe that the first three problems could be overcome. The first one is the very classical problem of descent of formality which is well-known to hold for individual commutative dg-algebras (see \cite{HS}) and may be easily extended to morphisms, but a proof of functorial descent of formality seems to be missing from the literature. To solve the second one, we would need to argue that the little disks operad is an algebro-geometric object so that the Galois action is compatible with operadic composition. This is exactly the content of the paper \cite{BHK} (see also \cite{vaintrobmoduli} in the case of the two-dimensional little disks operad). The third problem could be solved by additivity as in the present paper. The last problem, though, seems very difficult to avoid as it is a classical fact that the only element of the absolute Galois group of $\QQ$ that commutes with complex conjugation is the identity. So, there does not seem to be possible to construct an automorphism of $\Dd_n$ that satisfies the requirements of Proposition \ref{automorphism} using merely the absolute Galois group action.
\end{remark}

\section{Proof of the theorem}

Let $G$ be $O(n)$ or $SO(n)$. From the previous sections
we have a functor $T\mapsto \Aa_G(\Dd_n(T))$ from the category $\cat{Tree}$ to filtered commutative dg-algebras over $\QQ$. This has an associated spectral sequence and we
consider the functor given by its $E_{n-1}$-page with its associated differential (see \ref{prop functoriality}). If $n$ is odd, then this functor has trivial differential. The following proposition deals with the case when $n$ is even.

\begin{proposition}\label{penultimate}
The functor $T\mapsto (E_{2n-1}(\Aa_G(\Dd_{2n}(T))),d_{2n-1})$, from $\cat{Tree}$ to $\cat{CDGA}_{\QQ}$, is formal, for
$G$ equal to $SO(2n)$ and $O(2n)$.
\end{proposition}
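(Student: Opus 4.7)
The plan is to first reduce the $G = O(2n)$ case to $G = SO(2n)$. Since the Weyl group of $O(2n)$ contains the Weyl group of $SO(2n)$ with quotient $C_2$, the result from the section on rational equivariant homotopy theory for the orthogonal group yields $\Aa_{O(2n)}(X) \simeq \Aa_{SO(2n)}(X)^{C_2}$; as $C_2$-fixed points commute with the $E_r$ functors and are exact rationally, it suffices to exhibit a natural $C_2$-equivariant quasi-isomorphism in the $SO(2n)$ case.

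For $G = SO(2n)$, using $H^*(BSO(2n)) = \QQ[p_1, \ldots, p_{n-1}, e]$ together with the Arnold presentation from Theorem \ref{cohomconf}, I would first identify $E_{2n-1}(\Aa_G(\Dd_{2n}(T)))$ as the bigraded CDGA $\QQ[p_1, \ldots, p_{n-1}, e] \otimes H^*(\Dd_{2n}(T))$, with differential the derivation sending each Arnold generator $x^v_{ij}$ to $\pm e$. This identification of the differential follows from naturality of the Leray--Serre spectral sequence applied to the $SO(2n)$-equivariant composites $\Dd_{2n}(T) \to \Conf_{|v|}(\RR^{2n}) \to S^{2n-1}$ together with the classical transgression formula $x \mapsto e$ for the Borel fibration $(S^{2n-1})_{hSO(2n)} \to BSO(2n)$.

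I would then construct the natural CDGA retraction $\epsilon \colon E_{2n-1} \to E_{2n}$ as follows. On each tensor factor $H^*(\Conf_k(\RR^{2n}))$ of $H^*(\Dd_{2n}(T))$, define $\epsilon(x_{ij}) := x_{ij} - \bar x$ with $\bar x := \binom{k}{2}^{-1} \sum_{i' < j'} x_{i'j'}$; extend multiplicatively with $\epsilon(e) = 0$ and $\epsilon(p_i) = p_i$. The image lands in the sub-CDGA $\QQ[p_1, \ldots, p_{n-1}] \otimes \Ker \delta_T$ of $E_{2n-1}$, where $\delta_T$ is the odd derivation of degree $-(2n-1)$ with $\delta_T(x^v_{ij}) = 1$ (since $\delta_T$ kills $\epsilon(x_{ij})$ by construction). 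A direct check using graded anticommutativity confirms that $\epsilon$ preserves the squaring and Arnold relations, and $C_2$-equivariance is manifest. That $\epsilon$ is a quasi-isomorphism onto $E_{2n}$ follows from the contracting homotopy $h(z) := x^v_{ij} \cdot z$ satisfying $\delta_T h + h \delta_T = \id$, which shows that the Koszul-type complex $(\QQ[e] \otimes H^*(\Dd_{2n}(T)), d_{2n-1})$ has cohomology $\Ker \delta_T$ concentrated in $e$-degree zero, whenever $T$ has a branching vertex. For trees $T$ without branching vertices, $H^*(\Dd_{2n}(T)) = \QQ$ and $\epsilon$ reduces to the identity of $H^*(BSO(2n))$.

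The main obstacle I anticipate is the naturality of $\epsilon$ across all morphisms of the dendroidal category. The $\Sigma_k$-equivariance of the symmetric averaging handles automorphisms of trees, and the argument above already shows that $\epsilon$ is compatible with unit morphisms from an edge; compatibility with face maps contracting an internal edge should reduce to checking that the average $\bar x$ on a corolla with $k_1 + k_2 - 1$ leaves pulls back correctly to the corresponding averages on the two constituent sub-corollas. Should this combinatorial verification prove cumbersome, an alternative is to invoke the purity machinery of Section \ref{darkmagic}: the automorphism $\varphi$ from Proposition \ref{automorphism} endows $E_{2n-1}$ with a $\QQ[\varphi]$-action, and a judicious filtration should bring it into a category of the form $\FCxp{r}{\QQ[\varphi]}$ so that Theorem \ref{theoEis1} yields the desired functorial formality abstractly.
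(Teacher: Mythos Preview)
Your reduction of the $O(2n)$ case to $SO(2n)$ via $C_2$-fixed points, and your identification of the differential $d_{2n}(x_{ij})=e$, are both correct and match the paper. The issue lies in the retraction $\epsilon$.

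\textbf{The naturality check you defer actually fails.} Take the inner face $C_3\to T$ in $\cat{Tree}$, where $T$ has two binary vertices and $C_3$ is the $3$-corolla obtained by contracting the internal edge. The induced map $H^*(\Conf_3)\to H^*(\Conf_2)\otimes H^*(\Conf_2)$ sends (in some labelling) $x_{12}\mapsto y_2$, $x_{13}\mapsto y_1$, $x_{23}\mapsto y_1$, where $y_1,y_2$ are the generators of the two $\Conf_2$ factors. Your $\epsilon_{C_3}$ sends $x_{12}$ to $x_{12}-\bar x=\tfrac{2}{3}x_{12}-\tfrac{1}{3}x_{13}-\tfrac{1}{3}x_{23}$, whose image is $\tfrac{2}{3}(y_2-y_1)\neq 0$. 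But on each binary factor $\bar x$ equals the unique generator, so your $\epsilon_T$ annihilates $y_1$ and $y_2$, hence kills the image of $x_{12}$. The square does not commute. The symmetric average $\bar x$ simply does not transform correctly under operadic composition: the number $\binom{k}{2}$ changes in a way that cannot be made compatible with how Arnold generators restrict.

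\textbf{Your fallback via purity is not available either.} On $E_{2n}^{-i,j}\subset H^{j-2i}(BG)\otimes H^i(\Dd_{2n})$, the automorphism $\varphi$ acts with weight $\tfrac{j-2i}{2}+\tfrac{ni}{2n-1}$. For Theorem~\ref{purepurity} to apply one needs this to equal $\alpha(j-i)$ for a single $\alpha$; matching the $j$-coefficient forces $\alpha=\tfrac12$, while the $i$-coefficient forces $\alpha=\tfrac{n-1}{2n-1}$, and these disagree for $n\geq 2$. No evident filtration on $E_{2n-1}$ repairs this.

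\textbf{How the paper proceeds.} Rather than a retraction, the paper builds an \emph{inclusion} the other way. The key observation is that $H^*(BSO(2n))/(e)\cong H^*(BSO(2n-1))$ can be realised as the subalgebra $\QQ[p_1,\dots,p_{n-1}]\subset H^*(BSO(2n))$. Tensoring with the natural sub-object $K_T:=\Ker d_{2n}\cap H^*(\Dd_{2n}(T))$ yields an inclusion
\[
H^*(BSO(2n-1))\otimes K_T\;\hookrightarrow\; H^*(BSO(2n))\otimes H^*(\Dd_{2n}(T))=E_{2n-1}(T)
\]
which is a quasi-isomorphism of CDGAs. Since $K_T$ is defined as the kernel of a natural transformation, this inclusion is automatically natural for all maps between non-linear trees---no combinatorial miracle required. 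Linear trees are handled separately, using that $H^*(\Dd_{2n}(T))=\QQ$ there; the transition map to non-linear trees is the canonical projection $\rho:H^*(BSO(2n))\to H^*(BSO(2n-1))$.
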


\begin{proof}
We first consider the case when $G=SO(2n)$.
The category $\cat{Tree}$ contains two types of objects: the linear trees and the non-linear trees. If $T$ is linear, $\Dd_{2n}(T)\simeq *$ and the Leray-Serre spectral sequence is simply
\[{}^{\mathrm{LS}}E_2^{i,j}=H^i(BSO(2n))\Rightarrow H^i(BSO(2n))\]
In this case, formality is trivial. If $T$ is non-linear, the Leray-Serre spectral sequence is given by
\[{}^{\mathrm{LS}}E_2^{i,j}=H^i(BSO(2n))\otimes H^j(\Dd_{2n}(T))\Rightarrow H^{i+j}_{SO(2n)}(\Dd_{2n}(T))\]
By Proposition \ref{cohomconf} the cohomology algebra $H^*(\Dd_{2n}(T))$ is generated by $H^{2n-1}(\Dd_{2n}(T))$ so by multiplicativity of the spectral sequence, the differential $d_{2n}$ is determined by its restriction to $H^{2n-1}(\Dd_{2n}(T))$. This restriction sends all the algebra generators to the Euler class $e$ in $H^{2n}(BSO(2n))$. This can be seen as in the proof of Theorem \ref{theo : conf is formal} replacing $BGL_n(\CC)$ with $BSO(2n)$. The quotient $H^*(BSO(2n))/(e)$ is isomorphic to $H^*(BSO(2n-1))$ and can naturally be viewed as a subalgebra of $H^*(BSO(2n))$, the subalgebra generated by Pontryagin classes $p_i$ for $i<n$. It follows that we have an  injection
\begin{equation}\tag{$\divideontimes$}\label{inclusion}
H^*(BSO(2n-1))\otimes (\Ker\,d_{2n}\cap H^*(\Dd_{2n}(T)))\hookrightarrow {}^{\mathrm{LS}}E_{2n}=H^*(BSO(2n))\otimes H^*(\Dd_{2n}(T))\end{equation}
which is a map of commutative dg-algebras. Moreover, it is straightforward to check that this map is a quasi-isomorphism.

Let us denote by $T\mapsto H(T)$ the assignment given by sending $T$ to $H^*(BSO(2n))$ if $T$ is linear or \[H^*(BSO(2n-1))\otimes (\Ker\,d_{2n}\cap H^*(\Dd_{2n}(T)))\] if $T$ is not linear. We claim that this assembles into a functor on $\cat{Tree}$. The functoriality is obvious for maps between linear trees and for maps between non-linear trees. Now, we observe that in $\cat{Tree}$, there are no maps from non-linear trees to linear trees. So it simply remains to define the functoriality for maps from a linear tree to a non-linear tree. In that case, we have the map
\[H^*(BSO(2n))\cong H^*(BSO(2n))\otimes \QQ \xrightarrow{\rho\otimes 1}H^*(BSO(2n-1))\otimes (\Ker\,d_{2n}\cap H^*(\Dd_{2n}(T)))\]
where $\rho$ is the canonical map $H^*(BSO(2n))\to H^*(BSO(2n-1))$.

We have just explained how to construct, for each $T$, a quasi-isomorphism \[H(T)\to E_{2n-1}(\Aa_{SO(2n)}(\Dd_{2n}(T))).\] One easily checks that this is a natural quasi-isomorphism.
This proves formality of the commutative dg-algebra $E_{2n-1}(\Aa_{SO(2n)}(\Dd_{2n}(T)))$.

To prove formality for the algebra $E_{2n-1}(\Aa_{O(2n)}(\Dd_{2n}(T)))$, we use that there is a $C_2$-action on $E_{2n-1}(\Aa_{SO(2n)}(\Dd_{2n}(T)))$ whose fixed points is precisely $E_{2n-1}(\Aa_{O(2n)}(\Dd_{2n}(T)))$. We construct a $C_2$-action on $H(T)$ using the obvious action on $H^*(BSO(2n))$ if $T$ is linear and the restriction of the $C_2$-action of $E_{2n-1}(\Aa_{SO(2n)}(\Dd_{2n}(T)))$
along the inclusion (\ref{inclusion}) when $T$ is not linear. This $C_2$-action is functorial for trees and the map $H(T)\to E_{2n-1}(\Aa_{SO(2n)}(\Dd_{2n}(T)))$ defined above is $C_2$-equivariant. Therefore, taking $C_2$-fixed points of this map gives functorial formality of $E_{2n-1}(\Aa_{O(2n)}(\Dd_{2n}(T)))$.
\end{proof}

\begin{theorem}\label{maintheo}Let $G$ be $O(n)$ or $SO(n)$.
The little $n$-disks operad $\Dd_n$ is $G$-equivariantly formal.
\end{theorem}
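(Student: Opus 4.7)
The plan is to apply the purity-implies-formality machinery of Section \ref{darkmagic} to the $\cat{Tree}$-indexed diagram $T \mapsto \Aa_G(\Dd_n(T))$ of filtered commutative dg-algebras, using Proposition \ref{automorphism} to provide the required $\varphi$-action. Fix $\mu \in \QQ^\times$ with $\mu \neq \pm 1$, set $\xi := \mu$ in the setup of Section \ref{darkmagic}, and let $\varphi$ be the automorphism of $(\Dd_n)_{\QQ}$ supplied by Proposition \ref{automorphism}. Since $\varphi$ induces an automorphism of the $\infty$-operad $((\Dd_n)_{\QQ})_{hG}$ (property (2)), it acts aritywise on $\Aa_G(\Dd_n(-))$ through maps of filtered cdga's, so the whole $\cat{Tree}$-diagram lives naturally in filtered $\QQ[\varphi]$-cdga's.

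The next step is a weight computation on the $E_1$-page
\[E_1^{-i,j}(\Aa_G(\Dd_n(T))) \cong H^{j-2i}(BG) \otimes H^i(\Dd_n(T)).\]
Combining parts (1) and (3) of Proposition \ref{automorphism}, $\varphi$ acts on this group with eigenvalue $\mu^{(j-2i)/2 + \alpha_n i}$, where $\alpha_n = 1/2$ for $n$ odd and $\alpha_n = n/(2(n-1))$ for $n$ even.

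If $n$ is odd the exponent simplifies to $(j-i)/2$, so in Leray--Serre coordinates the weight on $H^p(BG) \otimes H^q(\Dd_n(T))$ is $(p+q)/2$ and depends only on the total degree. Purity of weight $k/2$ is stable under subquotients and extensions along the spectral sequence, so each $H^k_G(\Dd_n(T))$ is pure of weight $k/2$. Theorem \ref{purepurity} with $\alpha = 1/2$ then provides a symmetric monoidal equivalence of cdga-valued functors on $\cat{Tree}$
\[\Aa_G(\Dd_n(-)) \simeq H^*_G(\Dd_n(-)),\]
which is exactly the formality statement.

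If $n$ is even the exponent rearranges to $\alpha\bigl((1-r)i + rj\bigr)$ with $\alpha = 1/(2(n-1))$ and $r = n-1$, which is the form appearing in Remark \ref{LisbonGate}. Hence $\Aa_G(\Dd_n(-))$ lands in $\FCxp{n-1}{\QQ[\varphi]}$, and Theorem \ref{theoEis1} supplies a symmetric monoidal equivalence of cdga-valued functors on $\cat{Tree}$ between $\Aa_G(\Dd_n(-))$ and $E_{n-1}(\Aa_G(\Dd_n(-)))$. Writing $n = 2m$, the target is precisely the functor $T \mapsto E_{2m-1}(\Aa_G(\Dd_{2m}(T)))$ shown to be formal in Proposition \ref{penultimate}, so composing the two equivalences finishes the proof. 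In this picture the conceptual obstacles have been cleared in advance: the existence of the arithmetic-flavoured $\varphi$ compatible with the $G$-action (Proposition \ref{automorphism}), the abstract functorial filtered formality (Theorem \ref{theoEis1}), and the tree-wise formality of the penultimate page (Proposition \ref{penultimate}); the only new content is the weight bookkeeping above and the identification $r=n-1$ that unlocks Remark \ref{LisbonGate}.
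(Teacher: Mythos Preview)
Your proof is correct and follows essentially the same route as the paper's: use Proposition \ref{automorphism} to endow the $\cat{Tree}$-diagram $\Aa_G(\Dd_n(-))$ with a $\varphi$-action, verify the purity condition on the $E_1$-page of the Leray--Serre spectral sequence, and then invoke Theorem \ref{purepurity} in the odd case and Theorem \ref{theoEis1} together with Proposition \ref{penultimate} in the even case. The only difference is a harmless normalisation: you take $\xi=\mu$ (giving $\alpha=1/2$ in the odd case and $\alpha=1/(2(n-1))$ in the even case), whereas the paper takes $\xi=\mu^{2}$; both choices make the weights integral on the nonvanishing groups, so either works.
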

\begin{proof}
Let $\xi$ be a square in $\QQ^{\times}$ different form $\pm 1$. Let $\mu$ be a square root of $\xi$. Using this choice of $\mu$ in Proposition \ref{automorphism}, we can promote the functor $\Aa_{G}(\Dd_n(T))$ to a functor from $\cat{Tree}$ to the category $\FCx{0}{\QQ[\varphi]}$  of filtered complexes of $\QQ[\varphi]$-modules up to $E_1$-isomorphisms. Composed with the forgetful functor
to complexes of $\QQ$-vector spaces  we recover the functor
 \[T\mapsto \Aa_{\pl}(\Dd_n(T)_{hG}).\]

In the odd case, we get that $E_1^{-i,j}(\Aa_G(\Dd_n))$ is pure of weight $j-i$, so Theorem \ref{purepurity} implies formality.

In the even case, we get that $E_1^{-i,j}(\Aa_G(\Dd_n))$ is pure of weight $\frac{1}{n-1}\left[i(n-2)+j(n-1)\right]$ so this fits in Theorem \ref{theoEis1} with $\alpha=1/(n-1)$ and $r=n-1$. Therefore we get a functorial model of  $\Aa_{\pl}(\Dd_n(T)_{hG})$, given by the
penultimate page of the Leray-Serre spectral sequence, together with its differential. The result now follows from Proposition \ref{penultimate}.
\end{proof}

For applications in manifold calculus, it is useful to understand the $O(n)$-equivariant homotopy type of the map $\Dd_n\to\Dd_{n+c}$ induced by the standard embedding $\RR^n\to\RR^{n+c}$. In this case, we have the following variant of our main theorem.

\begin{theorem}
Let $n\geq 3$ and $c\geq 2$. Let $G=O(n)$ or $G=SO(n)$. The map $\Dd_n\to\Dd_{n+c}$ induced by the standard inclusion of $\RR^n$ in $\RR^{n+c}$ is $G$-equivariantly formal.
\end{theorem}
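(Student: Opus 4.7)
The plan is to replay the proof of Theorem \ref{maintheo} for the morphism of operads, the new ingredient being a compatible automorphism of the map. By the additivity theorem (used in the proof of Proposition \ref{automorphism}), after pro-unipotent completion we identify $\Dd_{n+c}^{\wedge} \simeq \Dd_n^{\wedge} \boxtimes \Dd_c^{\wedge}$ in a way that sends the standard inclusion $\Dd_n \to \Dd_{n+c}$ to the unit map $\Dd_n \to \Dd_n \boxtimes \Dd_c$. Under the orthogonal splitting $\RR^{n+c} = \RR^n \oplus \RR^c$, the $O(n)$-action on $\Dd_{n+c}$ restricted from the standard $O(n+c)$-action corresponds to the $O(n)$-action on the first factor combined with the trivial action on the second, so it is compatible with the box product decomposition.

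I then choose automorphisms $\varphi_n$ of $(\Dd_n)_{\QQ}$ and $\varphi_c$ of $(\Dd_c)_{\QQ}$ produced by Proposition \ref{automorphism}, with parameters $\mu_n, \mu_c$ to be tuned below. Their box product $\varphi = \varphi_n \boxtimes \varphi_c$ is an automorphism of $(\Dd_{n+c})_{\QQ}$ which is compatible with the inclusion from $\Dd_n$ (since the automorphisms of Proposition \ref{automorphism} preserve the operadic unit), descends to an automorphism of the $O(n)$-Borel construction, and acts on $H^*(BO(n))$ as in Proposition \ref{automorphism}(3). The action of $\varphi$ on the generator $x_{12} \in H^{n+c-1}(\Dd_{n+c}(2))$ can be computed: the arity-$2$ space $\Dd_{n+c}(2) \simeq S^{n+c-1}$ decomposes, as an $O(n) \times O(c)$-space, as the join $S^{n-1} * S^{c-1}$, whose top cohomology class is the tensor product of the top classes of the two spheres. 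Hence $\varphi$ scales $x_{12}$ by the product of the scalars by which $\varphi_n$ and $\varphi_c$ act on the top classes of $\Dd_n(2)$ and $\Dd_c(2)$, and by multiplicativity of $H^*(\Dd_{n+c}(T))$ via the Arnold--Cohen presentation (Theorem \ref{cohomconf}) this determines the action on arbitrary trees.

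Selecting $\mu_n, \mu_c$ as suitable rational powers of a common $\xi \in \QQ^{\times} \setminus \{\pm 1\}$, I arrange both $\Aa_G(\Dd_n)$ and $\Aa_G(\Dd_{n+c})$ to lie in a common purity category $\FCxp{r}{\QQ[\varphi]}$ with $r = n+c-1$ and a common $\alpha$. Theorem \ref{theoEis1}, applied as an equivalence of symmetric monoidal $\infty$-functors, then yields a $\cat{Tree}$-natural quasi-isomorphism identifying the map $\Aa_G(\Dd_{n+c}) \to \Aa_G(\Dd_n)$ with its image on the $E_r$-page. In the all-odd case this page has vanishing differentials, so the map is already presented by its cohomology. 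In the even cases a relative analogue of Proposition \ref{penultimate} is required: the replacements $H(T)$ constructed there are natural in the inclusion $\Dd_n \to \Dd_{n+c}$, since both the kernels of the transgressions on $H^*(\Dd_{\bullet}(T))$ and the Pontryagin-class subalgebras of $H^*(BSO(\bullet-1))$ behave functorially along the standard inclusion, so the two formality quasi-isomorphisms assemble into a quasi-isomorphism of maps of $\cat{Tree}$-diagrams.

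The hardest part is verifying the purity of $\varphi_n \boxtimes \varphi_c$ on $H^*(\Dd_{n+c}(T))$ for higher-arity trees: the box product does not realize arity spaces as Cartesian products, so the arity-$2$ computation must propagate via the multiplicativity of the Arnold--Cohen presentation. The remainder is bookkeeping to ensure all constructions assemble naturally in the tree index $T$ and in the operad map.
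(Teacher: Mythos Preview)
Your overall architecture---build a compatible automorphism of the map via additivity, then feed the pair into the purity machine---is exactly the paper's strategy. But there is a genuine gap in your numerology, and it is not cosmetic.

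You claim that a suitable choice of $\mu_n,\mu_c$ places both $\Aa_G(\Dd_n)$ and $\Aa_G(\Dd_{n+c})$ in $\FCxp{r}{\QQ[\varphi]}$ with $r=n+c-1$. This is impossible. Once you invoke Proposition~\ref{automorphism} for $\varphi_n$, the weights on $H^*(BG)$ and on $H^*(\Dd_n)$ are rigidly linked: in the even case the weight of $E_1^{-i,j}(\Aa_G(\Dd_n))$ is (up to scale) $j+i(2-n)/(n-1)$, and matching this to the form $\alpha((1-r)i+jr)$ forces $r=n-1$. In the odd case the weight is proportional to $j-i$, which fits \emph{no} $r\geq 1$ in the filtered framework; that is why the paper switches to Theorem~\ref{purepurity} on the underlying complex rather than Theorem~\ref{theoEis1}. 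The freedom in $\mu_c$ (or $\nu$) is used only to make $\Aa_G(\Dd_{n+c})$ land in the \emph{same} category already dictated by $\Dd_n$; it cannot move $r$. So the correct statement is: for $n$ even, take $r=n-1$, $\alpha=1/(n-1)$, and choose $\nu$ so that the arity-$2$ eigenvalue on $\Dd_{n+c}$ equals $\xi^{n(n+c-1)/(n-1)}$; for $n$ odd, forget the Leray--Serre filtration and observe that $H^k$ of both Borel constructions is pure of weight $k$.

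A second, smaller gap: you produce $\varphi_c$ from Proposition~\ref{automorphism}, which requires $c\geq 3$ and thus misses $c=2$. The paper instead takes for $\varphi_c$ any automorphism of $(\Dd_c)_\QQ$ with prescribed arity-$2$ degree $\nu$ (this exists for all $c\geq 2$, e.g.\ via a $\GT$ element when $c=2$). Since $G=O(n)$ acts trivially on the $\Dd_c$ factor, no $O(c)$-compatibility of $\varphi_c$ is needed, so the weaker input suffices. Your observation that a relative form of Proposition~\ref{penultimate} is needed in the even case is well taken; the paper's ``in either case we get formality of the map'' leans on exactly that, and your sketch of why the two $H(T)$-models are compatible along the inclusion is the right idea.
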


\begin{proof}
As in Proposition \ref{automorphism}, for any two units $\mu$ and $\nu$ in $\QQ$, we may construct an automorphism $\varphi$ of the map $(\Dd_n)_\QQ\to(\Dd_{n+c})_\QQ$ satisfying the following conditions
\begin{enumerate}
\item The induced action of $\varphi^*$ on $H^{i}(\Dd_n)$ is by multiplication by $\mu^{i/2}$ if $n$ is odd and by $\mu^{ni/2(n-1)}$ if $n$ is even.
\item The induced action on $H^{i}(\Dd_{n+c})$ is by multiplication by $(\mu^{(n-1)/2}\nu)^{i/(n+c-1)}$ if $n$ is odd or $(\mu^{n/2}\nu)^{i/(n+c-1)}$ if $n$ is even.
\item The automorphism $\varphi$ extends to an automorphism of the map of $\infty$-operads
\[((\Dd_n)_{\QQ})_{hO(n)}\to ((\Dd_{n+c})_{\QQ})_{hO(n)}.\]
\item The induced action of $\varphi^*$ on $H^{i}(BO(n))$ is by multiplication by $\mu^{i/2}$.
\end{enumerate}
This is constructed by taking the box product of the automorphism that we construct in Proposition \ref{automorphism} and any automorphism of $(\Dd_c)_{\QQ}$ that acts with degree $\nu$ in arity $2$ (which exists by \cite[Theorem 7.1]{boavidaformality}).

Now, take $\xi=\sqrt{\mu}$ and $\nu=\xi^{c}$ if $n$ is odd or $\nu=\xi^{\frac{cn}{n-1}}$ if $n$ is even (since we have freedom in $\mu$, we can assume that these powers exist in $\QQ^{\times}$). Then, in the even case, the map
\[\Aa_G(\Dd_{n+c})\to\Aa_G(\Dd_n)\]
gets promoted to a map in $\FCxp{r}{\QQ[\varphi]}$ with $\alpha=1/(n-1)$ and $r=n-1$. In the odd case, it is a map in the category of cochain complexes satisfying the condition of Theorem \ref{purepurity}. In either case, we get formality of the map.
\end{proof}

\section{Equivariant cohomology ring of configuration spaces}\label{compute}
Let $G$ be $SO(m)$ or $O(m)$.
By Theorem \ref{maintheo}, a model for the Borel construction of $G$ acting on $\Conf_\ell(\RR^{m})$
is given by the rational equivariant cohomology ring. In this section, we compute these cohomology rings. The cases $m$ even or $m$ odd behave very differently.

\subsection{Examples in the case of two points}
Before we explain the general case, we illustrate this in two examples, for two points and in low dimension $m$.

\begin{example}\label{eq:2pts}
Consider the action of $S^1$ on $\Conf_2(\RR^3)$.
The $E_2$-page splits as a tensor product $H^*(BS^1)\otimes H^*(S^2)$. For degree reasons there are no differentials in this spectral sequence. But the $E_2$ page is not multiplicatively isomorphic to the cohomology of the Borel construction, i.e. the Borel fibration is not rationally trivial.

To describe the equivariant cohomology \emph{ring}, we observe that the action of $S^1$ on $\Conf_2(\RR^3) \simeq S^2$ is the suspension of the action of $S^1$ on itself, and so the Borel construction is the wedge $BS^1 \vee BS^1$. This gives the cohomology ring
\[
H^*(\Conf_2(\RR^3)_{hS^1}) \cong \QQ[a,b]/(ab)
\]
where $|a| = |b| = 2$. The action of $\Weyl=C_2$ on $BS^1 \vee BS^1$ exchanges the summands, so exchanges $a$ and $b$. (It corresponds to the degree one map on $S^2$ given by $(x,y,z) \mapsto (y,x,-z)$.) The homotopy fiber sequence \[
\Conf_2(\RR^3) \to \Conf_2(\RR^3)_{hS^1} \to BS^1
\]
is equivalent to the homotopy fiber sequence
$S^2 \to \CC P^\infty \vee \CC P^\infty \to \CC P^\infty$,
where the first map is the fold map $S^2 \to S^2 \vee S^2$ composed with the inclusion of the $2$-skeleton. In cohomology, this sequence has the form $\QQ[q] \to \QQ[a,b]/(ab) \to \QQ[x]/x^2$ where $q \mapsto a-b$ and $a, b\mapsto x$. Changing variables, $x = a$ and $q = a-b$, we have
\[H^*(\Conf_2(\RR^3)_{hS^1})\cong \QQ[x,q]/x(x-q)\,,\]
where $|x|=|q|=2$. The corresponding $C_2$-action on this algebra sends $x \mapsto x-q$ and $q \mapsto -q$. Setting $y = 2x-q$, we have $H^*(\Conf_2(\RR^3)_{hS^1}) \cong \QQ[y,q]/(y^2 - q^2)$. Now,
\[
H^*(\Conf_2(\RR^3)_{hSO(3)}) \cong H^*(\Conf_2(\RR^3)_{hS^1})^{C_2}
\]
and it follows that $H^*(\Conf_2(\RR^3)_{hSO(3)}) \cong \QQ[y, q^2]/(y^2 - q^2)$. This calculation is consistent with the fact that the $SO(3)$-homotopy orbits of $S^2$ is equivalent to $BS^1$.

As for the $\Sigma_2$-action on $\Conf_2(\RR^3)_{hS^1}$ which permutes labels, it sends $x \mapsto q - x$ and $q \mapsto q$. And for $\Conf_2(\RR^3)_{hSO(3)}$ it sends $y$ to $-y$.
\end{example}

\begin{example}
Now, consider the case of $\Conf_2(\RR^4)\simeq S^3$ with its $O(4)$-action. First, we restrict this action to the maximal torus $\TT=S^1\times S^1$. The Borel construction is then given by the wedge $BS^1\vee BS^1$ as can be seen from the following $\TT$-equivariant homotopy cocartesian square
\[
\xymatrix{
S^1\times S^1\ar[d]\ar[r]&S^1\times D^2\ar[d]\\
D^2\times S^1\ar[r]&S^3
}
\]
In this case the $E_2$ page of the Leray-Serre spectral sequence is given by
\[E_2^{*,*}=H^*(B\TT)\otimes H^*(S^3)=\QQ[q_1,q_2]\otimes\QQ[x]/x^2\]
There is a unique differential $d_4$ determined by
\[d_4(x)=q_1q_2\]
The $E_5$-page is thus $\QQ[q_1,q_2]/(q_1q_2)$ which is indeed the cohomology of the wedge.

Finally we consider the action of $SO(4)$ and $O(4)$. The spectral sequences are the Weyl fixed points of the previous one. Here $\Weyl$ is $\Sigma_2\ltimes C_2$ and $\Sigma_2\ltimes (C_2)^2$, respectively. For $SO(4)$, the Weyl fixed points of the $E_2$-page can be identified with the subalgebra generated by $q_1^2+q_2^2$, $q_1q_2$ and $x$ with differential $d_4(x)=q_1q_2$. Therefore the $E_5$-page is
\[E_5^{*,*}=\QQ[q_1^2+q_2^2].\]
For $O(4)$, the Weyl fixed points of the $E_2$-page
is
$\QQ[q_1^2+q_2^2]$ and so in this case $d_4\equiv 0$.
 In fact, the Borel construction relative to $O(4)$ and $SO(4)$ is in both cases rationally equivalent to $\mathbb{H} P^\infty$.
\end{example}

\subsection{General case in even dimensions}
In the even dimensional case $m=2n$, the Leray-Serre spectral sequence does not degenerate at the $E_2$-page, but the $E_{2n+1}$-page is multiplicatively isomorphic to the cohomology. For $G=SO(2n)$, we have
\[H^*_{SO(2n)}(\Conf_\ell(\RR^{2n}))\cong
H^*(BSO(2n-1))\otimes K,
\]
where $K$ is the kernel of the differential $d_{2n}$ restricted to the zeroth-column of the Leray-Serre spectral sequence, i.e. $H^{*}(\Conf_{\ell}(\RR^{2n}))$. It is determined by $d_{2n}(x_{ij})=e$, where $e$ is the Euler class.

For $G=O(2n)$ we have to take $C_2$-fixed points of the above formula. This gives
\[H^*_{O(2n)}(\Conf_\ell(\RR^{2n}))\cong
H^*(BSO(2n-1))\otimes K^{C_2}.
\]
The $C_2$-fixed points of $H^{*}(\Conf_{\ell}(\RR^{2n}))$ are the elements of degree $2i(2n-1)$, $i \geq 1$. So $K^{C_2}$ is simply the intersection of $K$ with $\oplus_{i} H^{2i(2n-1)}(\Conf_{\ell}(\RR^{2n}))$.

\subsection{General case in odd dimensions}
In the odd dimensional case, the situation is  harder. The Leray-Serre spectral sequence degenerates at the $E_2$-page, but not multiplicatively. So we must argue differently. Although we take coefficients in $\QQ$, the results in this section should apply if coefficients are taken in a ring where $2$ is invertible.

\begin{proposition}\label{prop:Tncohomology}
The $\TT^n$-equivariant cohomology of $\Conf_\ell(\RR^{2n+1})$ is isomorphic to the algebra generated by $y_{ij}$, ${1 \leq i < j \leq \ell}$, and $q_1, \dots, q_n$, with $y_{ij}$ of degree $2n$ and $q_u$ of degree $2$, modulo the modified Arnol'd relation
\[
y_{ij} y_{jk} - y_{jk}y_{ik} - y_{ik}y_{ij} + p_n = 0
\]
and the relation $y_{ij}^2 = p_n$\,, where $p_n = (q_1 \dots q_n)^2$.
\end{proposition}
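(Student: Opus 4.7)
The plan is to compute $H^*_{\TT^n}(\Conf_\ell(\RR^{2n+1}))$ via the Leray-Serre spectral sequence of the Borel fibration $(\Conf_\ell(\RR^{2n+1}))_{h\TT^n}\to B\TT^n$, determining the ring structure by solving extension problems. First I would show the spectral sequence collapses at $E_2$ for degree-parity reasons: $H^*(B\TT^n)$ sits in even degrees while $H^*(\Conf_\ell(\RR^{2n+1}))$ sits in degrees divisible by $2n$, and checking bidegrees of $d_r$ on an Arnold generator $x_{ij} \in H^{2n}$ shows no nonzero differential is possible; multiplicativity then forces the whole SS to collapse. Consequently $H^*_{\TT^n}(\Conf_\ell)$ is free as an $H^*(B\TT^n)$-module with basis a basis of $H^*(\Conf_\ell)$, and it is multiplicatively generated by $q_1,\dots,q_n$ together with lifts $y_{ij} \in H^{2n}_{\TT^n}$ of the Arnold classes. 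A natural choice is $y_{ij} := \pi_{ij}^*(y)$, where $\pi_{ij}\colon \Conf_\ell \to \Conf_2 \simeq S^{2n}$ is the $\TT^n$-equivariant map $(p_1,\dots,p_\ell)\mapsto (p_j-p_i)/|p_j-p_i|$ and $y \in H^{2n}_{\TT^n}(S^{2n})$ is a fixed generator.

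For the relation $y_{ij}^2 = p_n$, naturality reduces to $\ell = 2$. I would decompose $S^{2n} = D_N \cup_{S^{2n-1}} D_S$ into two $\TT^n$-equivariant hemispheres with the poles fixed and $\TT^n$ acting on the equator $S^{2n-1} \subset \CC^n$ via the standard representation. Each hemisphere is $\TT^n$-contractible, so its Borel construction is $B\TT^n$; the Gysin sequence for the sphere bundle of $L_1 \oplus \cdots \oplus L_n$ with regular Euler class $q_1\cdots q_n$ yields $H^*_{\TT^n}(S^{2n-1}) \cong H^*(B\TT^n)/(q_1\cdots q_n)$; and Mayer-Vietoris then produces
\[ H^*_{\TT^n}(S^{2n}) \cong \bigl\{(f,g) \in H^*(B\TT^n)^2 : f \equiv g \pmod{q_1\cdots q_n}\bigr\}. \]
The class $y := (q_1\cdots q_n,\,-q_1\cdots q_n)$ generates this ring over $H^*(B\TT^n)$ and satisfies $y^2 = p_n$ by direct computation, hence $y_{ij}^2 = p_n$ in $H^*_{\TT^n}(\Conf_\ell)$.

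For the modified Arnold relation, reduce to $\ell=3$ and consider $\alpha := y_{12}y_{23} - y_{23}y_{13} - y_{13}y_{12}$. The classical Arnold relation forces $\alpha$ to vanish in the associated graded $E_\infty^{0,4n} = H^{4n}(\Conf_3)$, so $\alpha$ lies in Leray-Serre filtration $F^{\geq 2n}$; in total degree $4n$ the only nonzero graded pieces are $H^{2n}(B\TT^n) \otimes H^{2n}(\Conf_3)$ and $H^{4n}(B\TT^n)$. A direct check (using how label-permutation acts on $\pi_{ij}$ together with $x_{ji} = -x_{ij}$) shows $\alpha$ is $\Sigma_3$-invariant, while an elementary linear-algebra computation gives $H^{2n}(\Conf_3)^{\Sigma_3} = 0$, so $\alpha$ projects to zero in $F^{2n}/F^{2n+1}$ and therefore lies in $H^{4n}(B\TT^n)$. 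To identify this polynomial as $-p_n$, one pulls back $\alpha$ along an auxiliary $\TT^n$-equivariant map: either via a Fadell-Neuwirth-style section $(p_1,p_2)\mapsto (p_1,p_2,N)$ for a fixed $\TT^n$-invariant point $N\in\RR^{2n+1}$, or alternatively via a direct Leray-Serre computation of the equivariant Fadell-Neuwirth fibration $\Conf_3\to\Conf_2$ with fiber $S^{2n}\vee S^{2n}$, reading the multiplicative extension off from how $y_{13}$ and $y_{23}$ compare via their relation $y_{ij}^2 = p_n$. Completeness then follows from a Hilbert-series comparison: the stated relations, together with derived consequences such as $y_{ij}y_{jk}y_{ik} = p_n(y_{ij} - y_{ik} + y_{jk})$, reduce every monomial in the $y$'s to a canonical form whose rank matches $H^*(\Conf_\ell(\RR^{2n+1}))$.

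The main obstacle is pinning down the constant in $\alpha = -p_n$: the symmetry and degree arguments reduce the question to a single polynomial in $H^{4n}(B\TT^n)$, but determining that it is exactly $-p_n$ requires a concrete, orientation-sensitive computation. Care is needed with the section approach because the domain $U \subset \Conf_2(\RR^{2n+1})$ of a $\TT^n$-equivariant section is \emph{not} homotopy equivalent to $\Conf_2(\RR^{2n+1})$ (removing the codimension-$(2n+1)$ loci $\{p_i = N\}$ changes $H^{2n}$), so signs must be tracked in the larger cohomology of $U$.
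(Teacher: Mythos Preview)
Your approach is essentially the paper's: both compute the $\ell=2$ case by decomposing $S^{2n}$ into $\TT^n$-invariant hemispheres (the paper phrases it as a pushout, you as Mayer--Vietoris, giving the same fiber-product description), pull back along the projections $\pi_{ij}$, establish freeness over $H^*(B\TT^n)$ (you via collapse of the Leray--Serre spectral sequence, the paper via Leray--Hirsch---equivalent here), and reduce the modified Arnol'd relation to $\ell=3$.

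There is one genuine difference in how the ``linear in $y$'' correction terms are handled. The paper writes the unknown relation as
\[
y_{ij}y_{jk}-y_{jk}y_{ik}-y_{ik}y_{ij}+f_{ij}\,y_{ij}+f_{jk}\,y_{jk}+f_{ik}\,y_{ik}+g=0
\]
and kills each $f$ separately by restricting along three different equivariant sections $\Conf_2\to\Conf_3$. Your $\Sigma_3$-invariance argument is a clean alternative and is correct: with your normalisation one has $y_{ji}=-y_{ij}$ (the antipodal on $S^{2n}$ swaps the hemispheres, so $a^*y=-y$), a short check shows $\alpha$ is $\Sigma_3$-invariant, and since $H^{2n}(\Conf_3(\RR^{2n+1}))\cong\mathrm{sgn}\oplus\mathrm{std}$ has no $\Sigma_3$-invariants, the $F^{2n}/F^{4n}$-component of $\alpha$ vanishes.

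The obstacle you flag at the end disappears with a small modification. Your section $(p_1,p_2)\mapsto(p_1,p_2,N)$ with $N$ \emph{fixed} is indeed only partially defined, but the paper's variant---adding a point \emph{far away on the $\TT^n$-fixed axis}, for instance $p_3=(|p_1|+|p_2|+1,0,\dots,0)\in\RR\times\RR^{2n}$---is a globally defined $\TT^n$-equivariant map $s:\Conf_2\to\Conf_3$. Since $\pi_{13}\circ s$ and $\pi_{23}\circ s$ are $\TT^n$-equivariantly homotopic to the constant map at the north pole, one gets $s^*y_{13}=s^*y_{23}=i_N^*y=q_1\cdots q_n$, and then $s^*\alpha=y_{12}\cdot q_1\cdots q_n-(q_1\cdots q_n)^2-(q_1\cdots q_n)\cdot y_{12}=-p_n$ immediately. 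No Fadell--Neuwirth fibration analysis is required.
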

\begin{proof}
We make a start by setting $\ell = 2$. The action of $\TT^n$ on $\Conf_2(\RR^{2n+1}) \simeq S^{2n}$ is the suspension of the action of $\TT^n$ on the equator $S^{2n-1}$. Therefore, the homotopy orbits of the $\TT^n$-action on $S^{2n}$ is identified with the pushout of
\[
B \TT^n \gets (S^{2n-1})_{h \TT^n} \to B \TT^n
\]
The cohomology of this pushout is the fiber product
\[
P := H^*(B \TT^n) \times_{H^*(S^{2n-1}_{h \TT^n})} H^*(B \TT^n) \;.
\]
We know $H^*(S^{2n-1}_{h \TT^n}) \cong \QQ[q_1, \dots, q_n]/(q_1 \dots q_n)$ (e.g. from proposition \ref{penultimate} or by noticing that $S^{2n-1}_{h \TT^n}$ is the base change of $BSO(2n-1) \to BSO(2n)$ along the inclusion $B\TT^n \to BSO(2n)$ and the Euler class $e \in H^{2n}(BSO(2n))$ is mapped to $q_1\dots q_n$). And the two maps in the fiber product
\[
\QQ[q_1, \dots, q_n] \to \QQ[q_1, \dots, q_n]/(q_1 \dots q_n)
\]
send $q_i$ to $q_i$ and $-q_i$, respectively. (The difference in sign has to do with the fact that the action of $\TT^n$ on the top hemisphere of $S^{2n}$ is the reflection of the action on the bottom hemisphere.) An element of $P$ corresponds to a pair $(f^{-},f + q_1\dots q_n g)$ where $f$, $f^-$ and $g$ belong to $\QQ[q_1,\dots, q_n]$ and $f^-(q_1, \dots, q_n) := f(-q_1, \dots, -q_n)$. Then we may define a map
\[
P \to \QQ[x, q_1, \dots, q_n]/(x^2 - x q_1 \dots q_n)
\]
by sending $(f^-, f + q_1\dots q_n g)$ to $f + xg$, where $x$ is of degree $2n$. This is an isomorphism of $\QQ[q_1,\dots, q_n]$-algebras with inverse sending $q_i$ to $(-q_i, q_i)$ and $x$ to $(0, q_1\dots q_n)$. Under the substitution $y:=2x - q_1 \dots q_n$, we get the presentation $\QQ[y, q_1, \dots, q_n]/(y^2 - p_n^2)$ in the statement. This settles the case of two points.

Moving on, for an arbitrary $\ell$ and $i, j \in \{1, \dots \ell\}$, let $\pi_{ij}$ denote the projection
\[
\Conf_\ell(\RR^{2n+1}) \to \Conf_2(\RR^{2n+1}) \;
\]
and consider the diagram with horizontal maps induced by the projections
\[
\xymatrix{
\otimes_{i < j} H^*(\Conf_2(\RR^{2n+1})_{h \TT^n}) \ar[d]\ar[r]& H^*(\Conf_\ell(\RR^{2n+1})_{h\TT^n})\ar[d]\\
\otimes_{i < j} H^*(\Conf_2(\RR^{2n+1}))\ar[r]&H^*(\Conf_\ell(\RR^{2n+1}))
}
\]
The lower horizontal map is surjective (it enforces the Arnol'd relation). The left-hand map is identified with the tensor product, over $i,j$, of the map
\[
\QQ[x_{ij}, q_1, \dots, q_n]/(x_{ij}^2 - x_{ij} q_1 \dots q_n) \to \QQ[x_{ij}]/x_{ij}^2 \; .
\]
which sends $x_{ij}$ to $x_{ij}$ and each $q_u$ to zero (c.f. example \ref{eq:2pts}). In terms of the generators $y_{ij}:=2 x_{ij} - q_1\dots q_n$, this is the map
\[
\QQ[y_{ij}, q_1, \dots, q_n]/(y_{ij}^2 - p_n^2) \to \QQ[x_{ij}]/x_{ij}^2
\]
which sends $y_{ij}$ to $2 x_{ij}$ and each $q_u$ to zero. Now pick a basis of $H^*(\Conf_\ell(\RR^{2n+1}))$ by monomials in the $2x_{ij}$, call it $\{b_u\}$. This basis lift to a preferred basis $\{b^\prime_u\}$ of $H^*(\Conf_\ell(\RR^{2n+1})_{h \TT^n})$ by lifting it across the diagram above. The map
\[
H^*(\Conf_\ell(\RR^{2n+1})) \otimes H^*(B \TT^n) \to H^*(\Conf_\ell(\RR^{2n+1})_{h \TT^n})
\]
which sends $b_u \otimes q$ to $b_u^\prime q$ is an isomorphism of $H^*(B \TT^n)$-modules by the Leray-Hirsch theorem. (In particular, $2x_{ij}$ is sent to $y_{ij}$ under this isomorphism.) The map
\[
\QQ[\{y_{ij}\}, q_1, \dots, q_n] \to H^*(\Conf_\ell(\RR^{2n+1})_{h\TT^n})
\]
induced by the projection maps $\pi_{ij}$, respects the relation $y_{ij}^2 - p_n^2 = 0$ by construction. As for the Arnol'd relation, it is enough to verify it for three points since $y_{ij} y_{jk} - y_{jk}y_{ik} - y_{ik}y_{ij} + p_n$ is in the image of the map
\[
H^*(\Conf_3(\RR^{2n+1})_{h\TT^n}) \to H^*(\Conf_\ell(\RR^{2n+1})_{h\TT^n})
\]
induced by forgetting all points but $i,j,k$. Now, the composition with
\[
H^*(\Conf_3(\RR^{2n+1})_{h\TT^n}) \to H^*(\Conf_3(\RR^{2n+1}))
\]
satisfies the Arnol'd relation. This means we have relations of the form
\[
(y_{ij}y_{jk} - y_{jk}y_{ik} - y_{ik}y_{ij}) + f_{ij} y_{ij} + f_{jk} y_{jk} + f_{ik}y_{ik} + g(q_1, \dots, q_n) = 0
\]
in $H^*(\Conf_3(\RR^{2n+1})_{h\TT^n})$ for some polynomials $f_{ij}, f_{jk}, f_{ik}$ and $g$ in the variables $q_1, \dots, q_n$. The claim is the polynomials $f$ are zero and $g = p_n$. To see this, take a section $s$ of $p_{ij}$ which adds a far away point in $(0, \infty) \times \{0\} \subset \RR^1 \times \RR^{2n} = \RR^{2n+1}$. By construction, $s$ is $\TT^n$-equivariant and so descends to a map
\[
H^*(\Conf_3(\RR^{2n+1})_{h\TT^n}) \to H^*(\Conf_2(\RR^{2n+1})_{h\TT^n})
\]
sending $y_{ij}$ to $y_{ij}$, and sending $y_{jk}$ and $y_{ik}$ to $q_1\dots q_n$. The image of our relation reads
\[
y_{ij} q - q^2 - q y_{ij} + f_{ij} y_{ij} + f_{jk}q + f_{ij} q + g = 0
\]
where $q$ is short for $q_1 \dots q_n$. As such, $f_{ij}$ must be zero. In the same way, by choosing an appropriate section of $p_{jk}$ and $p_{ik}$, we conclude that $f_{jk}$ and $f_{ik}$ are also zero. And then we are left with the identity $g = q^2$.

Having described the map
\begin{equation*}\label{maprel}
\QQ[\{y_{ij}\}, q_1, \dots, q_n] / \mbox{relations} \to H^*(\Conf_\ell(\RR^{2n+1})_{h\TT^n})
\end{equation*}
what remains to do is showing that it is an isomorphism. By dimension counting, e.g. inducting up on length of the monomials in the $y_{ij}$, the two vector spaces have the same dimension. Finally, the map is surjective since every basis element $b_u^\prime$ is in the image.
\end{proof}

\begin{remark}
Here is an illuminating graphical description of the algebra from proposition \ref{prop:Tncohomology}. The elements are linear combinations of graphs with set of vertices $\{1, \dots, \ell \}$ with coefficients in $\QQ[q_1, \dots, q_n]$. These graphs are allowed to have multiple edges connecting vertices but no edges connecting a vertex to itself. However, the edges are not ordered, nor oriented. The relations on these graphs are of two kinds: the relation which says that a double edge may be removed at the expense of multiplying the resulting graph by $p_n = (q_1 \dots q_n)^2$, and the modified Arnol'd relation. Below is the graphical representation of the relations.
\smallskip
\begin{center}
\begin{tikzpicture}
\node (A) [] at (0,0) {};
\node (B) [] at (2,0) {};
\node (C) [] at (5,0) {};
\node (D) [] at (7,0) {};

\draw (A)  node[below] {$_i$} node{$\bullet$};
\draw (B)  node[below] {$_j$} node{$\bullet$};
\path [bend left=40] (A) edge (B);
\path [bend right=40] (A) edge (B);

\draw (3,0)   node{$=$};
\draw (4.2,0)   node{$p_n$};
\draw (C)  node[below] {$_i$} node{$\bullet$};
\draw (D)  node[below] {$_j$} node{$\bullet$};
\end{tikzpicture}

\begin{tikzpicture}
\node (A) [] at (0,0) {};
\node (B) [] at (1,0) {};
\node (C) [] at (2,0) {};
\node (D) [] at (3,0) {};
\node (E) [] at (4,0) {};
\node (F) [] at (5,0) {};
\node (G) [] at (6,0) {};
\node (H) [] at (7,0) {};
\node (I) [] at (8,0) {};
\node (J) [] at (10,0) {};
\node (K) [] at (11,0) {};
\node (L) [] at (12,0) {};

\draw (A)  node[below] {$_i$} node{$\bullet$};
\draw (B)  node[below] {$_j$} node{$\bullet$};
\draw (C)  node[below] {$_k$} node{$\bullet$};
\path [bend left=40] (A) edge (B);
\path [bend left=40] (B) edge (C);
\draw (2.5,0)   node{$-$};
\path [bend left=40] (D) edge (F);
\path [bend left=40] (D) edge (E);
\draw (E)  node[below] {$_j$} node{$\bullet$};
\draw (D)  node[below] {$_i$} node{$\bullet$};
\draw (F)  node[below] {$_k$} node{$\bullet$};
\draw (5.5,0)   node{$-$};
\path [bend left=40] (G) edge (I);
\path [bend left=40] (H) edge (I);
\draw (G)  node[below] {$_i$} node{$\bullet$};
\draw (H)  node[below] {$_j$} node{$\bullet$};
\draw (I)  node[below] {$_k$} node{$\bullet$};
\draw (8.7,0)   node{$\quad = \; - p_n$};
\draw (J)  node[below] {$_i$} node{$\bullet$};
\draw (K)  node[below] {$_j$} node{$\bullet$};
\draw (L)  node[below] {$_k$} node{$\bullet$};
\end{tikzpicture}
\end{center}
The multiplication is given by superposition (or union) of graphs and multiplication of coefficients.
The non-equivariant cohomology of $\Conf_\ell(\RR^{2n+1})$ (Theorem \ref{cohomconf}) has a similar, and well-known, graphical description. In cohomology, the map $\Conf_\ell(\RR^{2n+1}) \to \Conf_\ell(\RR^{2n+1})_{h\TT^n}$ is given by

\smallskip
\begin{center}
\begin{tikzpicture}
\node (A) [] at (0,0) {};
\node (B) [] at (2,0) {};
\node (C) [] at (5,0) {};
\node (D) [] at (7,0) {};

\draw (A)  node[below] {$_i$} node{$\bullet$};
\draw (B)  node[below] {$_j$} node{$\bullet$};
\path (A) edge (B);
\path (C) edge (D);

\draw (3,0)   node{$\mapsto$};
\draw (4.2,0)   node{$2$};
\draw (C)  node[below] {$_i$} node{$\bullet$};
\draw (D)  node[below] {$_j$} node{$\bullet$};
\end{tikzpicture}
\end{center}
\end{remark}

The $SO(2n+1)$-equivariant cohomology of $\Conf_\ell(\RR^{2n+1})$ is isomorphic to the $\Weyl$-fixed points of the $\TT^n$-equivariant cohomology, where $\Weyl \subset (\Sigma_n\ltimes C_2^n)\times C_2$ is the Weyl group as in section \ref{subsection : orthogonal stuff}. The Weyl group acts on the coefficients $\QQ[q_1, \dots, q_n]$ by permuting the $q_i$ and changing sign. And it fixes the generators $y_{ij}$. (To see this, recall from the proof of proposition \ref{prop:Tncohomology} that $y_{ij}=2x_{ij} - q_1 \dots q_n$. An element $(\sigma, \epsilon_1, \dots, \epsilon_n, \eta) \in \Weyl$ acts as $q_1 \dots q_n \mapsto \eta q_1 \dots q_n$ (recall $\eta = \textup{sign}(\sigma) \epsilon_1, \dots, \epsilon_n$), acts as $x_{ij} \mapsto x_{ij} - q_1 \dots q_n$ if $\eta = -1$ and as $x_{ij} \mapsto x_{ij}$ if $\eta = 1$). So we deduce:

\begin{proposition}\label{prop:SOncohomology}
The $SO(2n+1)$-equivariant cohomology of $\Conf_\ell(\RR^{2n+1})$ is isomorphic to the algebra generated by $y_{ij}$, ${1 \leq i < j \leq \ell}$ and $p_1, \dots, p_n$, with $y_{ij}$ of degree $2n$ and $p_u$ of degree $4u$, modulo the Arnol'd relation
\[
y_{ij} y_{jk} - y_{jk}y_{ik} - y_{ik}y_{ij} + p_n = 0
\]
and the relations $y_{ij}^2 = p_n$.
\end{proposition}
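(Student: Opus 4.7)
The plan is to deduce this presentation directly from Proposition \ref{prop:Tncohomology} by taking $\Weyl$-invariants, where $\Weyl$ is the Weyl group of $SO(2n+1)$ as in Section \ref{subsection : orthogonal stuff}. Using the general proposition that reduces rational $G$-equivariant cohomology to $\Weyl$-invariants of the $\TT^n$-equivariant cohomology, we have
\[
H^*_{SO(2n+1)}(\Conf_\ell(\RR^{2n+1})) \;\cong\; \bigl(H^*_{\TT^n}(\Conf_\ell(\RR^{2n+1}))\bigr)^{\Weyl}.
\]

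First I would pin down the $\Weyl$-action on the generators of Proposition \ref{prop:Tncohomology}. An element $(\sigma,\epsilon_1,\ldots,\epsilon_n,\eta)\in\Weyl$ acts on $\QQ[q_1,\ldots,q_n]=H^*(B\TT^n)$ by the permutation $\sigma$ and the rescalings $q_i\mapsto\epsilon_i q_i$, while $\eta$ acts trivially on the $q_i$ as it comes from the extra $\RR$-summand. Thus $q_1\cdots q_n$ is multiplied by $\epsilon_1\cdots\epsilon_n = \mathrm{sign}(\sigma)\,\eta$, using the Weyl constraint $\mathrm{sign}(\sigma)\epsilon_1\cdots\epsilon_n\eta=1$. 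For the action on $x_{ij}$, which is the top class of $\Conf_2(\RR^{2n+1}) \simeq S^{2n}$, the reflection in the $\RR$-factor reverses orientation of $S^{2n}$, so unwinding the hemisphere pushout description of $S^{2n}_{h\TT^n}$ used in the proof of Proposition \ref{prop:Tncohomology} shows that $x_{ij} \mapsto x_{ij} - q_1\cdots q_n$ precisely when the scalar acting on $q_1\cdots q_n$ is $-1$, and $x_{ij} \mapsto x_{ij}$ otherwise. The combination $y_{ij} := 2x_{ij} - q_1 \cdots q_n$ is then $\Weyl$-invariant by inspection.

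Second, I identify the invariants of $\QQ[q_1,\ldots,q_n]$ under $\Weyl$. The projection $\Weyl \to \Sigma_n \ltimes C_2^n$ forgetting $\eta$ is a group isomorphism (the constraint determines $\eta$ from the rest), so the invariants coincide with those under the hyperoctahedral group $\Sigma_n \ltimes C_2^n$. Classical invariant theory gives $\QQ[p_1,\ldots,p_n]$ with $p_u$ the $u$-th elementary symmetric polynomial in $q_1^2,\ldots,q_n^2$; in particular $p_n = (q_1\cdots q_n)^2$, matching the element of the same name in Proposition \ref{prop:Tncohomology}.

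Finally I assemble the presentation. The Leray--Hirsch argument from the proof of Proposition \ref{prop:Tncohomology} shows that $H^*_{\TT^n}(\Conf_\ell(\RR^{2n+1}))$ is free as a $\QQ[q_1,\ldots,q_n]$-module on monomials in the $y_{ij}$; since all $y_{ij}$ are $\Weyl$-invariant, taking invariants commutes with this decomposition. Hence the ring of $\Weyl$-invariants is generated by the $y_{ij}$ together with $p_1,\ldots,p_n$, subject to the already $\Weyl$-invariant relations $y_{ij}^2 = p_n$ and the modified Arnol'd relation $y_{ij}y_{jk}-y_{jk}y_{ik}-y_{ik}y_{ij}+p_n=0$, giving exactly the stated presentation. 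The main obstacle is verifying the invariance of $y_{ij}$: this requires a careful equivariant chain-level analysis of the $\Weyl$-action on the hemisphere decomposition of $S^{2n}_{h\TT^n}$, after which the rest follows from standard invariant theory and the freeness of the module structure established earlier.
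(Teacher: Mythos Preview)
Your proposal is correct and follows essentially the same route as the paper: reduce to $\Weyl$-invariants of the $\TT^n$-equivariant cohomology, verify that each $y_{ij}=2x_{ij}-q_1\cdots q_n$ is $\Weyl$-invariant via the hemisphere description, identify $\QQ[q_1,\ldots,q_n]^{\Weyl}=\QQ[p_1,\ldots,p_n]$, and read off the presentation. Your use of the Leray--Hirsch freeness (the $\TT^n$-equivariant cohomology is free over $\QQ[q_1,\ldots,q_n]$ on $\Weyl$-invariant monomials in the $y_{ij}$) to pass to invariants is exactly the implicit step the paper leaves to the reader. One small discrepancy: you key the nontrivial transformation $x_{ij}\mapsto x_{ij}-q_1\cdots q_n$ on the sign with which $q_1\cdots q_n$ transforms, whereas the paper keys it on $\eta$; either way the computation shows $y_{ij}$ is fixed, so this does not affect the argument.
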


Finally, the $O(2n+1)$-equivariant cohomology is the subalgebra generated by $y_{ij} y_{k \ell}$ with $i<j$ and $k < \ell$ and the Pontryagin classes $p_u$ with $u\leq n$. As a vector space is it isomorphic to
\[
H^*(BO(2n+1)) \otimes \left( \oplus_{j} H^{4jn}(\Conf_\ell(\RR^{2n+1})) \right)  \; .
\]

\begin{remark}
The action of a group $G$ on a space $X$ can be encoded in at least three equivalent ways. First, as a map $BG \to B\textup{Aut}^h(X)$. Second, in the Borel fibration $X_{hG} \to BG$. Third, as a map
$X \times G \to X$ (or, rather, a certain simplicial space).
The same is true if we replace \emph{space} by \emph{operad} and also if we rationalize. Khoroskin-Willwacher  \cite{KW} take the first perspective. Using a graph complex model for $B\textup{Aut}^h(\mathcal{D}_n)$ they describe the said map by providing a Maurer-Cartan element in a certain Lie algebra. In this paper, we take the second perspective. So one may wonder how to relate that with the third point of view.

The action map $X \times G \to X$ can be recovered from the Borel fibration as the induced map on homotopy fibers, over $BG$, of the map $X \to X_{hG}$. For the little disks operad with its $SO(n)$ action, our results suggest that, rationally, the action map  factors through $X \times K(\QQ, 4n-1)$ in the odd case and $X \times K(\QQ, n-1)$ in the even case, corresponding to the (shifted) top Pontryagin class and Euler class, respectively. This resonates with the results of Khoroskin-Willwacher.
%
%
%
\end{remark}

\bibliographystyle{amsalpha}
\bibliography{biblio}

\providecommand{\bysame}{\leavevmode\hbox to3em{\hrulefill}\thinspace}
\providecommand{\MR}{\relax\ifhmode\unskip\space\fi MR }
\providecommand{\MRhref}[2]{%
  \href{http://www.ams.org/mathscinet-getitem?mr=#1}{#2}
}
\providecommand{\href}[2]{#2}
\begin{thebibliography}{BdBHK24}

\bibitem[AB84]{atiyahbott}
M.~Atiyah and R.~Bott, \emph{The moment map and equivariant cohomology},
  Topology \textbf{23} (1984), no.~1, 1--28.

\bibitem[ALV07]{aronecalculus}
G.~Arone, P.~Lambrechts, and I.~Voli{\'c}, \emph{Calculus of functors, operad
  formality, and rational homology of embedding spaces}, Acta Math.
  \textbf{199} (2007), no.~2, 153--198 (English).

\bibitem[Ant24]{antieauspectral}
B.~Antieau, \emph{Spectral sequences, d{\'e}calage, and the {B}eilinson
  t-structure}, arXiv preprint arXiv:2411.09115 (2024).

\bibitem[AT14]{aroneturchin}
G.~Arone and V.~Turchin, \emph{On the rational homology of high-dimensional
  analogues of spaces of long knots}, Geom. Topol. \textbf{18} (2014), no.~3,
  1261--1322 (English).

\bibitem[BdBH21]{boavidaformality}
P.~Boavida~de Brito and G.~Horel, \emph{On the formality of the little disks
  operad in positive characteristic}, J. Lond. Math. Soc., II. Ser.
  \textbf{104} (2021), no.~2, 634--667 (English).

\bibitem[BdBHK24]{BHK}
P.~Boavida~de Brito, G.~Horel, and D.~Kosanovi{\'c}, \emph{An algebro-geometric
  model for the configuration category}, arXiv preprint arXiv:2411.06934
  (2024).

\bibitem[BdBW18]{boavidaweiss}
P.~Boavida~de Brito and M.~Weiss, \emph{The configuration category of a
  product}, Proceedings of the American Mathematical Society \textbf{146}
  (2018), no.~10, 4497--4512.

\bibitem[BH14]{bergnergroup}
J.~Bergner and P.~Hackney, \emph{Group actions on segal operads}, Israel
  Journal of Mathematics \textbf{202} (2014), no.~1, 423--460.

\bibitem[BN98]{BaNa}
D.~Bar-Natan, \emph{On associators and the {G}rothendieck-{T}eichmuller group.
  {I}}, Selecta Math. (N.S.) \textbf{4} (1998), no.~2, 183--212.

\bibitem[Bor67]{Borel}
A.~Borel, \emph{Topics in the homology theory of fibre bundles. {Lectures}
  given at the {University} of {Chicago}, 1954. {Notes} by {E}. {Halpern}},
  Lect. Notes Math., vol.~36, Springer, Cham, 1967 (English).

\bibitem[CG14]{CG1}
J.~Cirici and F.~Guill\'{e}n, \emph{{$E_1$}-formality of complex algebraic
  varieties}, Algebr. Geom. Topol. \textbf{14} (2014), no.~5, 3049--3079.

\bibitem[CG16]{CG2}
\bysame, \emph{Homotopy theory of mixed {H}odge complexes}, Tohoku Math. J. (2)
  \textbf{68} (2016), no.~3, 349--375.

\bibitem[CH20]{CiHo1}
J.~Cirici and G.~Horel, \emph{Mixed {H}odge structures and formality of
  symmetric monoidal functors}, Ann. Sci. \'{E}c. Norm. Sup\'{e}r. (4)
  \textbf{53} (2020), no.~4, 1071--1104.

\bibitem[CH22]{CiHo2}
\bysame, \emph{{\'E}tale cohomology, purity and formality with torsion
  coefficients}, J. Topol. \textbf{15} (2022), no.~4, 2270--2297.

\bibitem[CM13]{moerdijkdendroidal}
D.-C. Cisinski and I.~Moerdijk, \emph{Dendroidal {Segal} spaces and {{\(\infty
  \)}}-operads}, J. Topol. \textbf{6} (2013), no.~3, 675--704.

\bibitem[CRL25]{calaqueroca}
D.~Calaque and V.~Roca~Lucio, \emph{Associators from an operadic point of
  view}, Higher structures and operadic calculus. Papers based on lectures
  given at the workshop, Centre de Recerca Matem\`atica, CRM, Barcelona, Spain,
  June 21--25, 2021, Cham: Birkh{\"a}user, 2025, pp.~211--291 (English).

\bibitem[Del71]{DeHII}
P.~Deligne, \emph{Th\'eorie de {H}odge. {II}}, Inst. Hautes \'Etudes Sci. Publ.
  Math. (1971), no.~40, 5--57.

\bibitem[Dri91]{drinfeldGT}
V.~G. Drinfel'd, \emph{On quasitriangular quasi-{Hopf} algebras and a group
  closely connected with {{\(\text{Gal}(\overline{\mathbb Q}/\mathbb Q)\)}}},
  Leningr. Math. J. \textbf{2} (1991), no.~4, 829--860 (English).

\bibitem[Fre17]{Fresse}
B.~Fresse, \emph{Homotopy of operads and {G}rothendieck-{T}eichm\"uller groups.
  {P}art 1}, Mathematical Surveys and Monographs, vol. 217, American
  Mathematical Society, Providence, RI, 2017, The algebraic theory and its
  topological background.

\bibitem[Hin15]{hinichrectification}
V.~Hinich, \emph{Rectification of algebras and modules}, Doc. Math. \textbf{20}
  (2015), 879--926. \MR{3404213}

\bibitem[HKK25]{horel2022two}
G.~Horel, M.~Krannich, and A.~Kupers, \emph{Two remarks on spaces of maps
  between operads of little cubes}, High. Struct. \textbf{9} (2025), no.~1,
  329--339 (English).

\bibitem[HS79]{HS}
S.~Halperin and J.~Stasheff, \emph{Obstructions to homotopy equivalences}, Adv.
  in Math. \textbf{32} (1979), no.~3, 233--279.

\bibitem[KW25]{KW}
A.~Khoroshkin and T.~Willwacher, \emph{Real models for the framed little
  {{\(n\)}}-disks operads}, J. Topol. \textbf{18} (2025), no.~4, 84 (English).

\bibitem[Lil03]{Lilly}
S.~Lillywhite, \emph{Formality in an equivariant setting}, Trans. Amer. Math.
  Soc. \textbf{355} (2003), no.~7, 2771--2793.

\bibitem[Lur09]{HTT}
J.~Lurie, \emph{Higher topos theory}, Ann. Math. Stud., vol. 170, Princeton,
  NJ: Princeton University Press, 2009 (English).

\bibitem[Lur17]{lurie}
\bysame, \emph{Higher algebra},
  \url{https://www.math.ias.edu/~lurie/papers/HA.pdf}, 2017.

\bibitem[LV14]{lambrechtsvolic}
P.~Lambrechts and I.~Voli{\'c}, \emph{Formality of the little {{\(N\)}}-disks
  operad}, Mem. Am. Math. Soc., vol. 1079, Providence, RI: American
  Mathematical Society (AMS), 2014 (English).

\bibitem[Mor19]{Moriya}
S.~Moriya, \emph{Non-formality of the odd dimensional framed little balls
  operads}, Int. Math. Res. Not. IMRN (2019), no.~2, 625--639.

\bibitem[NA87]{Na}
V.~Navarro-Aznar, \emph{Sur la th\'eorie de {H}odge-{D}eligne}, Invent. Math.
  \textbf{90} (1987), no.~1, 11--76.

\bibitem[Pet14]{petersenminimal}
D.~Petersen, \emph{Minimal models, {GT}-action and formality of the little disk
  operad}, Selecta Mathematica \textbf{20} (2014), no.~3, 817--822.

\bibitem[Rez01]{rezkmodel}
C.~Rezk, \emph{A model for the homotopy theory of homotopy theory},
  Transactions of the American Mathematical Society \textbf{353} (2001), no.~3,
  973--1007.

\bibitem[Scu04]{Sculltorus}
L.~Scull, \emph{Formality and {$S^1$}-equivariant algebraic models}, Homotopy
  theory: relations with algebraic geometry, group cohomology, and algebraic
  {$K$}-theory, Contemp. Math., vol. 346, Amer. Math. Soc., Providence, RI,
  2004, pp.~463--471.

\bibitem[Scu07]{ScullKahler}
\bysame, \emph{On the equivariant formality of {K}\"ahler manifolds with torus
  group actions}, Math. Z. \textbf{257} (2007), no.~3, 547--562.

\bibitem[Vai19]{vaintrobmoduli}
D.~Vaintrob, \emph{Moduli of framed formal curves}, Preprint,
  {arXiv}:1910.11550, 2019.

\end{thebibliography}

\end{document}